\numberwithin{equation}{section}
\newtheorem{theorem}{Theorem}[section]
\newtheorem{lemma}[theorem]{Lemma}
\newtheorem{proposition}[theorem]{Proposition}
\newtheorem{remark}[theorem]{Remark}
\newtheorem{step}{Step}
\renewcommand{\tilde}{\widetilde}          
\DeclareMathSymbol{\leqslant}{\mathalpha}{AMSa}{"36} 
\DeclareMathSymbol{\geqslant}{\mathalpha}{AMSa}{"3E} 
\DeclareMathSymbol{\eset}{\mathalpha}{AMSb}{"3F}     
\newcommand{\N}{\mathbb{N}}
\renewcommand{\epsilon}{\varepsilon}
\renewcommand{\theta}{\vartheta}
\renewcommand{\phi}{\varphi}
\newenvironment{myenumerate}{%
\renewcommand{\theenumi}{\arabic{enumi}}%
\renewcommand{\labelenumi}{{\rm(\theenumi)}}%
\begin{list}{\labelenumi}
        {%
        \setlength{\itemsep}{0.4em}%
        \setlength{\topsep}{0.5em}%
        \setlength\leftmargin{2.45em}%
        \setlength\labelwidth{2.05em}%
        \setlength{\labelsep}{0.4em}%
        \usecounter{enumi}%
        }%
        }%
{\end{list}
}
\newenvironment{myitemize}{%
\begin{list}{$\bullet$}%
        {%
        \setlength{\itemsep}{0.4em}%
        \setlength{\topsep}{0.5em}%
        \setlength\leftmargin{2.45em}%
        \setlength\labelwidth{2.05em}%
        \setlength{\labelsep}{0.4em}%
        }%
        }%
{\end{list}}
\renewenvironment{itemize}{
\begin{myitemize}}%
{\end{myitemize}}
\author{Gia Bao Nguyen}
\address{Laboratoire de Math\'ematiques Jean Leray UMR 6629,
Universit\'e de Nantes, 2 Rue de la Houssini\`ere,
BP 92208, F-44322 Nantes Cedex 03, France}
\email{gia-bao.nguyen@univ-nantes.fr}
\author{Nicolas P\'etr\'elis}
\address{Laboratoire de Math\'ematiques Jean Leray UMR 6629,
Universit\'e de Nantes, 2 Rue de la Houssini\`ere,
BP 92208, F-44322 Nantes Cedex 03, France}
\email{nicolas.petrelis@univ-nantes.fr}
\keywords{Polymer collapse, phase transition, variational formula}
\subjclass[2010]{60K35, 82B26, 82B41}
\thanks{{\it Acknowledgements.} We thank Philippe Carmona for fruitful discussions.}
\date{\today}
\title[Polymer Collapse]
{A variational formula for the free energy of the partially directed polymer collapse.}
\date{\today}
\begin{document}

\begin{abstract}
Long linear polymers in dilute solutions are known to undergo a collapse transition from a random coil (expand itself) to a compact ball (fold itself up) when the temperature is lowered, or the solvent quality deteriorates. A natural model for this phenomenon is a $1+1$ dimensional self-interacting and partially directed self-avoiding walk. In this paper, we develop a new method to study the partition function of this model, from which we derive a variational formula for the free energy. This variational formula allows us to prove the existence of the collapse transition and to identify the critical temperature in a simple way. We also prove that the order of the collapse transition is $3/2$.
\end{abstract}
\maketitle

\section{Introduction}

\subsection{The model}\label{model}
The spatial configurations of the polymer of length $L$ ($L$ monomers) are modelled by the trajectories of a partially directed random walk on $\mathbb{Z}^2$. This random walk is self-avoiding and does not take any step in the negative x-direction. More precisely, we let $\vec{e_1}=(1,0),\vec{e_2}=(0,1)$ denote the canonical basis of $\mathbb{Z}^2$ and  we choose the set of allowed $L$-step paths as:
\begin{align}\label{defWL}
\nonumber\mathcal{W}_L=\{w=(w_i)_{i=0}^L\in(\mathbb{N}_0\times\mathbb{Z})^{L+1}:\,&w_0=0,\\
\nonumber&w_{i+1}-w_i\in\{\vec{e_1},\vec{e_2},-\vec{e_2}\}\;\forall 0\leq i<L-1,\\
\nonumber&w_i\neq w_j\;\forall 0\leq i<j\leq L,\\
&w_L-w_{L-1}=\vec{e_1}\}.
\end{align}

Note that the choice of $w$ ending with an horizontal step is made for convenience only. Let us introduce two different laws on $\mathcal{W}_L$, uniform and non-uniform, denoted by $\mathbf{P}^\mathsf{u}_L$ and $\mathbf{P}^\mathsf{nu}_L$, respectively.\\
\\
(1) The uniform model: all $L$-step paths have the same probability, i.e.,
\begin{equation}
\mathbf{P}^\mathsf{u}_L(w)=\frac{1}{|\mathcal{W}_L|},\quad w\in\mathcal{W}_L.
\end{equation}

\noindent (2) The non-uniform model: the $L$-step paths have the following law
\begin{itemize}
  \item At the origin or after an horizontal step: the walker must step north, south or east with equal probability $1/3$.
  \item After a vertical step north (respectively south): the walker must step north (respectively south) or east with probability $1/2$.
\end{itemize}
For later convenience, the law on $\mathcal{W}_L$ is denoted by $\mathbf{P}^\mathsf{m}_L$, where $\mathsf{m}\in\{\mathsf{u},\mathsf{nu}\}$.

Any non-consecutive vertices of the walk though adjacent on the lattice are called \textit{self-touching} (Fig.~\ref{fig:self}). To take into account the interactions between monomers, we assign an energetic reward $\beta\geq0$ to the polymer for each of its self-touching. Thus, we associate with every random walk trajectory $w=(w_i)_{i=0}^L\in\mathcal{W}_L$ the Hamiltonian 
\begin{equation}\label{eq:Hal}
H_{L,\beta}(w):=\beta\sum_{\substack{i,j=0\\i<j-1}}^L\mathbf{1}_{\{\lVert w_i-w_j\rVert=1\}},
\end{equation}
such that the partition function of the model can be written as
\begin{equation}\label{eq:dist}
Z^\mathsf{m}_{L,\beta}=\sum_{w\in\mathcal{W}_L}e^{H_{L,\beta}(w)}\,\mathbf{P}^\mathsf{m}_L(w),\quad \mathsf{m}\in\{\mathsf{u},\mathsf{nu}\}.
\end{equation}

\begin{figure}\centering
	\includegraphics[width=.55\textwidth]{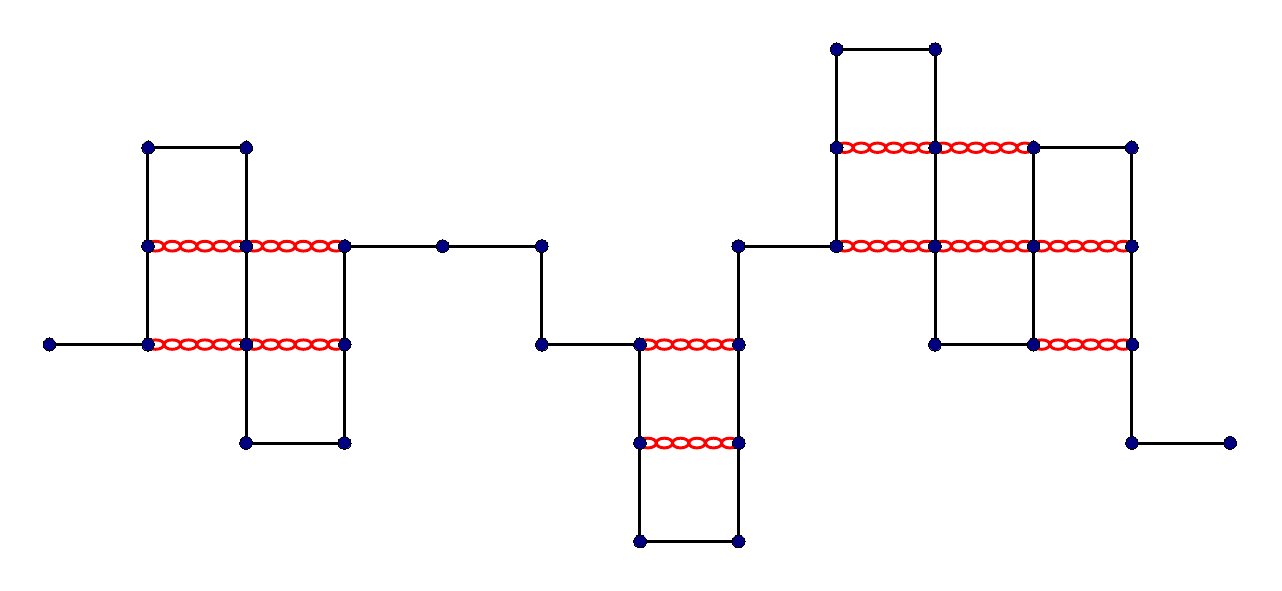}
	\caption{A partially directed walk with 12 self-touchings represented by light shaded bonds.}
	\label{fig:self}
\end{figure}

\subsection{Background}
The model presented in Section \ref{model} is often refered to in the physics litterature as the Interacting partially directed self-avoiding walk (IPDSAW). The IPDSAW was introduced in \cite{ZL68} as a directed version of the Interacting self-avoiding walk (ISAW) for which the set $\mathcal{W}_L$ is similar to what we defined in \eqref{defWL} except that steps to the west ($-\vec{e_1}$) are allowed as well. 

The ISAW allows for a better understanding of the geometric configuration adopted by an homopolymer dipped in a poor solvent. The monomers constituting the polymer try to exclude the solvent and therefore attract one another. Consequently, at low temperature, the polymer will fold on itself to form a \textit{compact ball}. The transition from an extended configuration to a compact ball is called \textit{collapse transition}. Both ISAW and IPDSAW are known to undergo a collapse transition at some critical temperature $T_c=1/\beta_c$. Detecting the phase transition requires to spot the temperature at which the free energy $f$ of the model loses its analyticity. In \cite{BGW92} or \cite{OPB92}, the method employed consists in providing an analytic expression of the generating function $G(z)=\sum_{L=1}^\infty Z_{L,\beta}^\mathsf{m}z^L$ whose radius of convergence $R$ satisfies $f=-\log R$. The idea behind the computation of the generating function is to rewrite $G(z)$ under the form $\sum_{r=0}^\infty g_r(z)$ where $g_r(z)$ is the contribution to $G(z)$ of those trajectories making exactly $r$ consecutive vertical steps at the beginning, regardless of the total length of the trajectory. By applying some smart paths concatenation, a recurrence relation is obtained between $g_{r-1},g_r$ and $g_{r+1}$ and then, after making the ansatz that $g_r$ can be expressed as an infinite sum, the recurrence relation allows for an exact computation of the terms in the infinite sum that provides $g_r$. For a detailed version of the computation, we can refer to \cite[p.~371--375]{CHP12}.
 
The same method has subsequently been applied to some variations of the IPDSAW, for instance in \cite{BDL09} where a force is applied at the right extremity of the polymer or in \cite{OPB92} where a continuous version of the model is studied.

One of the main difficulty arising from the computation of the generating function $G$ is that its analytic expression is very complicated and only gives an undirect access to the free energy. Our aim in this paper is to present a new method, that allows to work directly with the partition function of finite size. We will provide a variational formula for the free energy, from which the critical temperature can be computed easily. With the help of this variational formula, we will also give a rigorous proof of the fact that the collapse transition is of order $3/2$. Such a proof was lacking up to now.

\subsection{A new approach}
We partition the set $\mathcal{W}_L$ into $L$ subsets, each of them containing those trajectories that have the same number of horizontal steps. Via an algebraic manipulation of the Hamiltonian, it turns out that the contribution to the partition function $Z^\mathsf{m}_{L,\beta}$ in \eqref{eq:dist} of those trajectories making exactly $N$ horizontal steps can be expressed in a convenient manner. To be more specific, this contribution is proportional to a constant term (depending on $\beta$ only) at power $N$ times the probability that a symetric random walk, whose law $\mathbf{P}_{\beta}$ will be defined in \eqref{lawP}, satisfies some geometric constraints. Thus, we have
\begin{equation}\label{eq:partfuncp1}
Z^\mathsf{m}_{L,\beta}\sim\sum_{N=1}^{L}\big(\Gamma^\mathsf{m}(\beta)\big)^N\,\mathbf{P}_{\beta}\big(\mathcal{V}_{N+1,L-N}\big),
\end{equation}
where
\begin{equation*}
\mathcal{V}_{n,k}:=\big\{(V)_{i=0}^n:\,\textstyle\sum_{i=1}^n|V_i|=k,\,V_{n}=0\big\},
\end{equation*}
where $\beta\mapsto\Gamma^\mathsf{m}(\beta)$ is a continuous and decreasing bijection from $(0,\infty)$ to $(0,\infty)$ and where $(V_i)_{i\in \mathbb{N}}$ is a random walk with geometric increments. The formula in \eqref{eq:partfuncp1} will be made rigorous in Section \ref{na}, but the phase diagram of the model can already be read on this formula. In dependence of the value taken by $\Gamma^\mathsf{m}(\beta)$, we can indeed distinguish between the $3$ regimes displayed by the model:\\
\begin{itemize}
	\item $\Gamma^\mathsf{m}(\beta)>1$: \emph{the extended regime}. For $c\in(0,1)$, the quantities $\mathbf{P}_{\beta}\big(\mathcal{V}_{cL,L(1-c)}\big)$ are decaying exponentially fast when $L\to \infty$, at a rate which grows with $c$. Thus, the leading terms in \eqref{eq:partfuncp1} are those indexed by $N\sim\tilde{c}L$, where $\tilde{c}\in(0,1)$ is the result of an optimization. This phase is extended because those trajectories that are mainly contributing to the partition function have an horizontal length $N$ and a total length $L$ of the same order (Fig.~\ref{fig:phases}).\\
	\item $\Gamma^\mathsf{m}(\beta)=1$: \emph{the critical regime}. The leading terms in \eqref{eq:partfuncp1} are those indexed by $N$ of order $L^{2/3}$, because the quantity $\mathbf{P}_{\beta}\big(\mathcal{V}_{N+1,L-N}\big)$ reaches its maximum for such values of $N$.\\
	\item $\Gamma^\mathsf{m}(\beta)<1$: \emph{the collapsed regime}. For $c\in(0,\infty)$, the quantities $\mathbf{P}_{\beta}\big(\mathcal{V}_{c\sqrt{L},L}\big)$ are decaying like $e^{-t_c\sqrt{L}}$ where $t_c>0$ is decreasing in $c$. Thus, the leading terms in \eqref{eq:partfuncp1} are those indexed by $N\sim\hat{c}\,\sqrt{L}$, where $\hat{c}\in(0,\infty)$ is again the result of an optimization. This phase is collapsed because the trajectories that are mainly contributing to the partition function have an horizontal length $N$ much smaller than their total length $L$ (Fig.~\ref{fig:phases}).
\end{itemize}
\begin{figure}[ht]\centering
	\begin{tabular}{c c}
	\includegraphics[width=.3\textwidth]{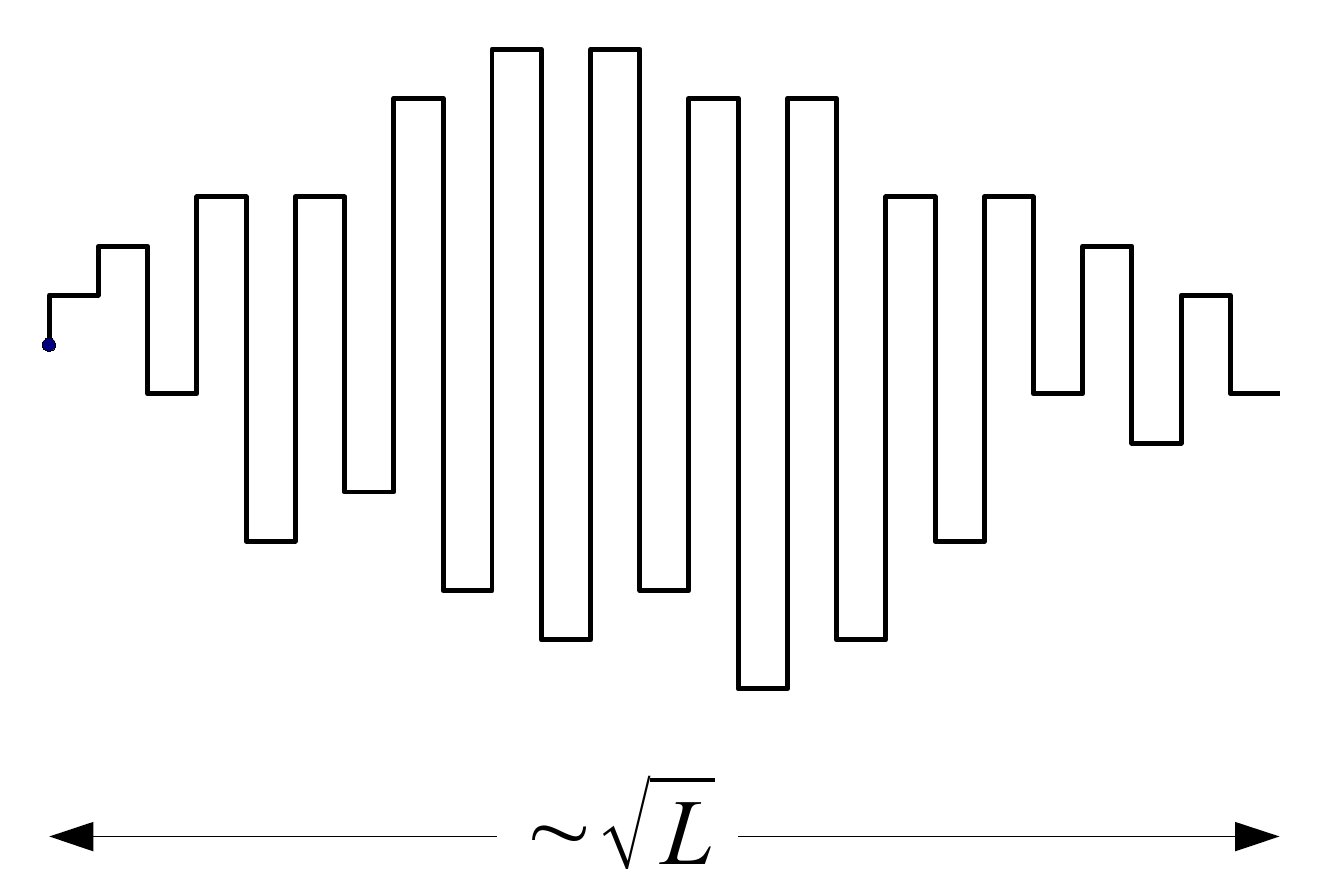} & \includegraphics[width=.5\textwidth]{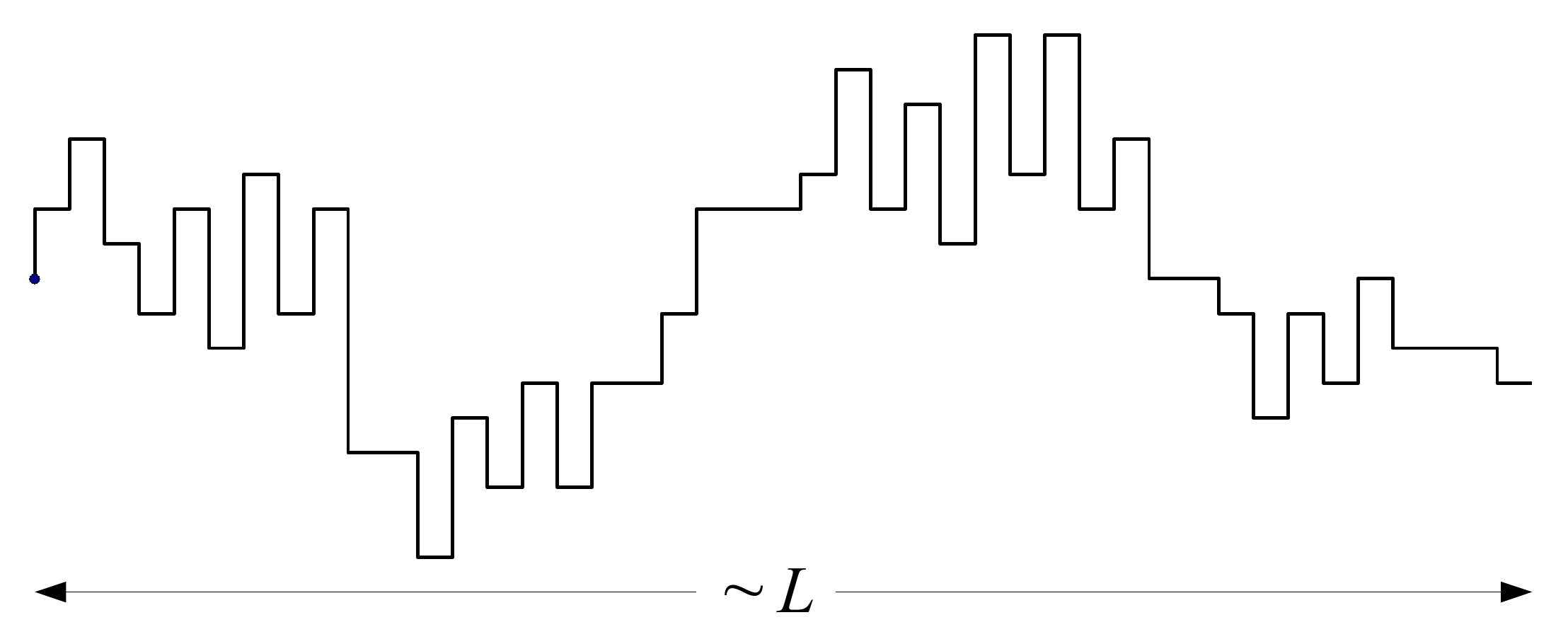}\\
	\small{Collapsed} & \small{Extended}\\
	\end{tabular}
	\caption{A typical path of both phases.}
	\label{fig:phases}
\end{figure}

\subsection{Main results}
For both models, i.e., $\mathsf{m}\in\{\mathsf{u},\mathsf{nu}\}$, the free energy per step $f^\mathsf{m}:(0,\infty)\to\mathbb{R}$ is defined as the limit
\begin{equation}\label{eq:fenergy}
f^\mathsf{m}(\beta):=\lim_{L\to\infty}f^\mathsf{m}_L(\beta),\quad\text{where}\quad f^\mathsf{m}_L(\beta):=\frac{1}{L}\log Z^\mathsf{m}_{L,\beta}.
\end{equation}
Note that $\{\log Z^\mathsf{m}_{L,\beta}\}_L$ is a subadditive sequence and since the number of self-touchings is smaller than the number of monomers, i.e. $H_{L,\beta}(w)\leq \beta L$, we immediately obtain the upper bound $Z^\mathsf{m}_{L,\beta}\leq e^{\beta L}$ for $\beta\in(0,\infty)$ and $\mathsf{m}\in\{\mathsf{u},\mathsf{nu}\}$. Then the limit in \eqref{eq:fenergy} exists and is finite
\begin{equation}
f^\mathsf{m}(\beta)=\lim_{L\to\infty}\frac{1}{L}\log Z^\mathsf{m}_{L,\beta}=\sup_{L\in\mathbb{N}}\frac{1}{L}\log Z^\mathsf{m}_{L,\beta}\leq\beta.
\end{equation}

If we shut down the self-interaction of the polymer, that is if we take $\beta=0$, then the density of self-touching performed by a typical $L$-step random walk trajectory belongs to $(0,1)$, the horizontal extension of this trajectory is of order $L$ and its vertical displacement of order $\sqrt{L}$. When $\beta$ becomes strictly positive, in turn, the geometric conformation adopted by the random walk is the result of an "energy-entropy" competition which can be understood as follows. To increase its self-touching density, the copolymer must both, reduce its number of horizontal steps and constrain its consecutive sequences of vertical steps to take opposite directions. However, these two geometric constraints have an entropic cost such that the free energy is the result of an optimization between the energetic gain and the entropic cost induced by a raise of the self-touching density. When $\beta$ becomes large, the system enters its collapsed phase which corresponds to a saturation of the self-touchings made by the polymer. In other words, the collapsed configurations have a self-touching density equal to $1$, which necessarily entails that the number of horizontal steps made by such configurations is of order $o(L)$ and that most pairs of consecutive vertical stretches are of opposite directions. These geometric restrictions are associated with a collapsed entropy $\kappa_\mathsf{m}$ for $\mathsf{m}\in\{\mathsf{u},\mathsf{nu}\}$ such that the free energy takes the form $\beta+\kappa_\mathsf{m}$. In Lemma \ref{le1} below, we display the value of this collapsed entropy.

\begin{lemma}\label{le1}
For $\beta>0$, $\mathsf{m}\in\{\mathsf{u},\mathsf{nu}\}$
\begin{equation}
f^\mathsf{m}(\beta)\geq\phi^\mathsf{m}_\beta, 
\end{equation}
where $\phi^\mathsf{u}_\beta=\beta-\log{(1+\sqrt{2})}$ and $\phi^\mathsf{nu}_\beta=\beta-\log2$.
\end{lemma}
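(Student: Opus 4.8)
The plan is to bound $f^{\mathsf m}(\beta)$ from below by evaluating the partition function on a single, explicitly chosen ``accordion'' trajectory, exploiting that $f^{\mathsf m}(\beta)=\sup_{L}f^{\mathsf m}_L(\beta)\ge f^{\mathsf m}_L(\beta)$ for every $L$. For integers $K,H\ge 1$ I would take $w^{*}=w^{*}_{K,H}\in\mathcal{W}_L$ to be the trajectory made of $K$ vertical stretches of common length $H$ with alternating directions, each followed by one horizontal step, so that the occupied columns $0,1,\dots,K-1$ all span the same interval $\{0,\dots,H\}$ and $L=K(H+1)$. Since consecutive columns overlap completely, every horizontal adjacency between two neighbouring columns other than the connecting step is a self-touching, so the number of self-touchings is $T=(K-1)H$ and $H_{L,\beta}(w^{*})=\beta(K-1)H$. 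Keeping only the term $w^{*}$ in \eqref{eq:dist} gives
\begin{equation*}
f^{\mathsf m}_L(\beta)\ \ge\ \frac{1}{L}\log\!\Big(e^{\beta(K-1)H}\,\mathbf{P}^{\mathsf m}_L(w^{*})\Big)
=\beta\,\frac{(K-1)H}{L}+\frac{1}{L}\log \mathbf{P}^{\mathsf m}_L(w^{*}),
\end{equation*}
and I would then let $K,H\to\infty$ (say $H=K\to\infty$), so that $(K-1)H/L\to 1$.

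For the non-uniform model the weight factorises over stretches: a stretch of length $H$ in a prescribed direction has probability $\tfrac13(\tfrac12)^{H-1}\tfrac12=\tfrac13\,2^{-H}$ (first step with probability $1/3$, then $H-1$ continuations and one exit step, each with probability $1/2$). Hence $\mathbf{P}^{\mathsf{nu}}_L(w^{*})=\big(\tfrac13 2^{-H}\big)^{K}=(2/3)^{K}2^{-L}$ using $KH=L-K$, so that $\tfrac1L\log\mathbf{P}^{\mathsf{nu}}_L(w^{*})=\tfrac{K}{L}\log(2/3)-\log 2\to-\log 2$. Combined with $(K-1)H/L\to1$ this yields $f^{\mathsf{nu}}(\beta)\ge\beta-\log 2=\phi^{\mathsf{nu}}_\beta$.

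For the uniform model $\mathbf{P}^{\mathsf u}_L(w^{*})=1/|\mathcal{W}_L|$, so I would be reduced to showing $\tfrac1L\log|\mathcal{W}_L|\to\log(1+\sqrt2)$. The key observation is that self-avoidance forces each maximal vertical run between two horizontal steps to be monotone, and conversely any sequence of monotone vertical runs separated by horizontal steps is automatically self-avoiding; thus $\mathcal{W}_L$ is in bijection with nonempty sequences of ``blocks'', a block being a vertical run of length $\ell\ge0$ (two direction choices if $\ell\ge1$, one if $\ell=0$) followed by one horizontal step and contributing $\ell+1$ steps. The block generating function is $B(z)=z+\tfrac{2z^{2}}{1-z}=\tfrac{z(1+z)}{1-z}$, and since a walk is a concatenation of blocks, $\sum_L|\mathcal{W}_L|\,z^{L}=B(z)/(1-B(z))$. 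Its dominant singularity is the smallest positive root of $B(z)=1$, i.e. of $z^{2}+2z-1=0$, namely $z=\sqrt2-1$; hence $|\mathcal{W}_L|$ grows like $(1+\sqrt2)^{L}$ and $\tfrac1L\log|\mathcal{W}_L|\to\log(1+\sqrt2)$. Substituting into the displayed inequality gives $f^{\mathsf u}(\beta)\ge\beta-\log(1+\sqrt2)=\phi^{\mathsf u}_\beta$.

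The only genuinely non-routine step is this computation of the connective constant $\lim_L\tfrac1L\log|\mathcal{W}_L|=\log(1+\sqrt2)$; the block decomposition and the rational generating function $B/(1-B)$ make it transparent, the constant arising from the root $\sqrt2-1$ of $z^{2}+2z-1$. Everything else — the count $T=(K-1)H$ of self-touchings and the factorised non-uniform weight — is elementary, and the boundary corrections from the first and last blocks and the terminal column are $O(H+K)=o(L)$, so they disappear after dividing by $L$ and letting $K,H\to\infty$.
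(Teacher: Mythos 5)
Your proposal is correct and follows essentially the same route as the paper: both restrict the partition function to a single ``accordion'' trajectory of $K$ alternating vertical stretches of length $H$ (the paper takes $K=\sqrt{L}$, $H=\sqrt{L}-1$), compute its energy $\beta(K-1)H$ and its probability ($1/|\mathcal{W}_L|$, resp.\ $(2/3)^{K}2^{-L}$), and let the stretch number and length tend to infinity. The only divergence is that you re-derive $\lim_L\tfrac1L\log|\mathcal{W}_L|=\log(1+\sqrt2)$ via the block generating function $B(z)=z(1+z)/(1-z)$ and the root of $z^2+2z-1$, whereas the paper simply cites this fact from the literature; your derivation is correct and makes the lemma self-contained.
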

\begin{proof}
We pick $L$ such that $\sqrt{L}\in\mathbb{N}$ and restrict the sum giving $Z^\mathsf{m}_{L,\beta}$ to a single $L$-step trajectory $\widetilde{w}$ which starts with $\sqrt{L}-1$ steps north then makes one step east, then $\sqrt{L}-1$ steps south, then one step east, then $\sqrt{L}-1$ steps north and so on... This trajectory makes $\sqrt{L}$ horizontal steps, separating $\sqrt{L}$ vertical stretches of length $\sqrt{L}-1$ each. Since any two consecutive vertical stretches of $\widetilde{w}$ have opposite direction, its Hamiltonian is given by $\beta(\sqrt{L}-1)^2\geq\beta L-2\beta\sqrt{L}$. Moreover, $\mathbf{P}^\mathsf{u}_L(\widetilde{w})=1/|\mathcal{W}_L|$ and $\mathbf{P}^\mathsf{nu}_L(\widetilde{w})=(2/3)^{\sqrt{L}}(1/2)^L$, therefore
\begin{equation}\label{ee}
Z^\mathsf{u}_{L,\beta}\geq\frac{e^{\beta(L-2\sqrt{L})}}{|\mathcal{W}_L|}\quad\text{and}\quad
Z^\mathsf{nu}_{L,\beta}\geq\left(\frac{e^\beta}{2}\right)^L\left(\frac{2}{3e^{2\beta}}\right)^{\sqrt{L}}.
\end{equation}
Since $\lim_{L\to\infty}L^{-1}\log|\mathcal{W}_L|=\log{(1+\sqrt2)}$ (see \cite[p.~5]{MBM10}), it remains to take $\tfrac{1}{L}\log$ in each term of the two inequalities in \eqref{ee} and to let $L\to\infty$ to complete the proof of the Lemma. 
\end{proof}

Clearly, all what Lemma \ref{le1} is saying is that $\kappa_\mathsf{u}\geq -\log2$ and that $\kappa_\mathsf{nu}\geq -\log(1+\sqrt{2})$. However, we will see below that these two inequalities are in fact equalities.

Let us define the \textit{excess free energy} $\tilde{f}^\mathsf{m}(\beta):=f^\mathsf{m}(\beta)-\phi^\mathsf{m}_\beta$, which by Lemma \ref{le1} above is always non negative. The lower bound in Lemma \ref{le1} allows us to partition $[0,\infty)$ into a collapsed phase denoted by $\mathcal{C}$ and an extended phase denoted by $\mathcal{E}$, i.e,
\begin{equation}
\mathcal{C}:=\{\beta:\tilde{f}^\mathsf{m}(\beta)=0\}
\end{equation}
and
\begin{equation}
\mathcal{E}:=\{\beta:\tilde{f}^\mathsf{m}(\beta)>0\}.
\end{equation}
Since $\tilde{f}^\mathsf{m}(\beta)$ is convex, non negative and bounded above, proving that there exists $\beta^\mathsf{m}_0\in [0,\infty)$ such that $\tilde{f}^\mathsf{m}(\beta^\mathsf{m}_0)=0$ will be sufficient to claim that $\tilde{f}^\mathsf{m}(\beta)=0$ for $\beta\geq\beta^\mathsf{m}_0$. Then, the critical point will be defined as 
\begin{equation}
\beta^\mathsf{m}_c:=\inf\{\beta\ge0:\tilde{f}^\mathsf{m}(\beta)=0\},
\end{equation}
and the sets $\mathcal{C}$ and $\mathcal{E}$ will become $\mathcal{C}=\{\beta:\beta\geq\beta^\mathsf{m}_c\}$ and $\mathcal{E}=\{\beta:\beta<\beta^\mathsf{m}_c\}$.

\subsubsection{Variational formula} In Theorem \ref{Thm1} below we provide a variational formula for the excess free energy of the model. We need to settle some of the ingredients appearing in the formula. We let $V:=(V_n)_{n\in\mathbb{N}}$ be a symmetric random walk on $\mathbb{Z}$, whose increments are independent and follow a geometric distribution, i.e. $V_0=0$, $V_n=\sum_{i=1}^n v_i$ for $n\in\mathbb{N}$ and $v:=(v_i)_{i\in\mathbb{N}}$ is an i.i.d sequence under the law $\mathbf{P}_{\beta}$, with distribution 
\begin{equation}\label{lawP}
\mathbf{P}_{\beta}(v_1=k)=\tfrac{e^{-\frac{\beta}{2}|k|}}{c_{\beta}}\quad\forall k\in\mathbb{Z}\quad\text{with}\quad c_{\beta}:=\tfrac{1+e^{-\beta/2}}{1-e^{-\beta/2}}.
\end{equation}
For each $\alpha\in[0,\infty)$, we set
\begin{equation}\label{eq:glimit1}
g_\beta(\alpha):=\lim_{N\to\infty}\tfrac{1}{N}\log\mathbf{P}_{\beta}\Big(\sum_{i=1}^N|V_i|\leq\alpha N,\,V_N=0\Big).
\end{equation}
We will prove in Section \ref{gbeta} below that the limit in \eqref{eq:glimit1} exists and that $\alpha\mapsto g_\beta(\alpha)$ is negative, concave, increasing on $[0,\infty)$ and converges to $0$ as $\alpha\to \infty$. Finally, we define the function $\Gamma^\mathsf{m}:(0,\infty)\to(0,\infty)$, for $\mathsf{m}\in\{\mathsf{u},\mathsf{nu}\}$, as
\begin{equation}\label{sqq}
	\begin{dcases*}
	\Gamma^\mathsf{u}(\beta)=\tfrac{c_\beta}{e^\beta},\\
  \Gamma^\mathsf{nu}(\beta)=\tfrac{2c_\beta}{3e^\beta}.
  \end{dcases*}
\end{equation}

\begin{theorem}[Variational formula]\label{Thm1}
For $\mathsf{m}\in\{\mathsf{u},\mathsf{nu}\}$, the excess free energy $\tilde{f}^\mathsf{m}(\beta)$ is given by
\begin{equation}\label{eq:main}
\tilde{f}^\mathsf{m}(\beta)=\sup_{\alpha\in[0,1]}\left[\alpha\log\left(\Gamma^\mathsf{m}(\beta)\right)+\alpha\,g_\beta\left(\tfrac{1-\alpha}{\alpha}\right)\right].
\end{equation}
\end{theorem}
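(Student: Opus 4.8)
The plan is to read off $\tilde f^\mathsf{m}$ from the rigorous counterpart of \eqref{eq:partfuncp1} and then to evaluate the resulting sum by a Laplace (Varadhan-type) argument. Concretely, I would first record the rigorous reduction of Section~\ref{na} in the form
\[
\tilde f^\mathsf{m}(\beta)=\lim_{L\to\infty}\tfrac1L\log\Sigma_L,\qquad \Sigma_L:=\sum_{N=1}^L\big(\Gamma^\mathsf{m}(\beta)\big)^N\,\mathbf{P}_{\beta}\big(\mathcal{V}_{N+1,L-N}\big),
\]
where the prefactor absorbed into the ``$\sim$'' of \eqref{eq:partfuncp1} is exactly $e^{\phi^\mathsf{m}_\beta L+o(L)}$ (for $\mathsf{m}=\mathsf{u}$ this is $e^{\beta L}/|\mathcal{W}_L|$, for $\mathsf{m}=\mathsf{nu}$ it is $(e^{\beta}/2)^L$). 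This turns the whole statement into a question about the exponential growth rate of $\Sigma_L$. The two ingredients I need about the geometric walk are: (i) a pointwise rate, $\tfrac1n\log\mathbf{P}_{\beta}(\mathcal{V}_{n,k_n})\to g_\beta(a)$ whenever $k_n/n\to a\in[0,\infty)$; and (ii) a clean one-sided bound valid for \emph{every} pair $(n,k)$.

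For (ii) I would exploit superadditivity. Concatenating a walk counted by $\mathcal{V}_{n_1,k_1}$ (which ends at $V_{n_1}=0$) with an independent walk counted by $\mathcal{V}_{n_2,k_2}$ produces a walk counted by $\mathcal{V}_{n_1+n_2,k_1+k_2}$, so that
\[
\mathbf{P}_{\beta}\big(\mathcal{V}_{n_1+n_2,k_1+k_2}\big)\ \ge\ \mathbf{P}_{\beta}\big(\mathcal{V}_{n_1,k_1}\big)\,\mathbf{P}_{\beta}\big(\mathcal{V}_{n_2,k_2}\big).
\]
Iterating along $(mn,mk)$ and letting $m\to\infty$ gives the uniform bound $\tfrac1n\log\mathbf{P}_{\beta}(\mathcal{V}_{n,k})\le g_\beta(k/n)$ for all $n,k$, once the superadditive limit is identified with $g_\beta$. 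This identification is the point where the inequality-constraint definition \eqref{eq:glimit1} of $g_\beta$ matters: since $g_\beta$ is increasing, the equality event $\{\sum_i|V_i|=k\}$ and the inequality event $\{\sum_i|V_i|\le k\}$ carry the same exponential rate, the dominant contribution to the latter coming from the largest admissible area $k$; this also yields (i).

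With these two facts the matching bounds are routine. For the lower bound I would fix $\alpha\in(0,1)$, take $N=\lfloor\alpha L\rfloor$, retain only that single term in $\Sigma_L$, and apply (i) with $n=N+1$, $k=L-N$ (so $k/n\to(1-\alpha)/\alpha$) to get $\liminf_L\tfrac1L\log\Sigma_L\ge\alpha\log\Gamma^\mathsf{m}(\beta)+\alpha\,g_\beta\big(\tfrac{1-\alpha}{\alpha}\big)$; optimizing over $\alpha\in(0,1)$ and invoking $\tilde f^\mathsf{m}\ge0$ from Lemma~\ref{le1} to cover $\alpha=0$ gives ``$\ge$'' the supremum in \eqref{eq:main}. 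For the upper bound I would write $\Sigma_L\le L\max_{1\le N\le L}(\Gamma^\mathsf{m})^N\mathbf{P}_{\beta}(\mathcal{V}_{N+1,L-N})$, insert the uniform bound (ii), and observe that each term is at most $\sup_{\alpha\in[0,1]}\big[\alpha\log\Gamma^\mathsf{m}(\beta)+\alpha g_\beta(\tfrac{1-\alpha}{\alpha})\big]+o(1)$. Since $\alpha\mapsto\alpha\log\Gamma^\mathsf{m}(\beta)+\alpha g_\beta(\tfrac{1-\alpha}{\alpha})$ extends continuously to $[0,1]$ (value $0$ at $\alpha=0$ because $g_\beta(\infty)=0$, value $\log\Gamma^\mathsf{m}(\beta)+g_\beta(0)$ at $\alpha=1$), the discrete maximum over $N$ converges to the continuous supremum as $L\to\infty$, giving ``$\le$''.

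The main obstacle is the upper-bound step, and within it the uniformity over $N$: the estimate $\tfrac1n\log\mathbf{P}_{\beta}(\mathcal{V}_{n,k})\le g_\beta(k/n)$ must hold simultaneously for all ratios $k/n$, including the boundary regimes $N=o(L)$ and $L-N=o(L)$, where $(1-\alpha)/\alpha$ degenerates to $\infty$ or $0$. The virtue of the superadditivity argument is that it delivers exactly such a term-by-term bound with no error term, so the Varadhan upper bound goes through without a separate exponential-tightness estimate; the only remaining care is the continuity of $\alpha\mapsto\alpha g_\beta(\tfrac{1-\alpha}{\alpha})$ at the endpoints, which follows from $g_\beta(0)>-\infty$ together with $g_\beta(\infty)=0$. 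Getting the reconciliation of the equality constraint in $\mathcal{V}_{n,k}$ with the inequality constraint defining $g_\beta$ right, via monotonicity of $g_\beta$, is the one conceptual point to be careful about.
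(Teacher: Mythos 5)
Your proposal is correct and follows essentially the same route as the paper: the exact identity of Proposition \ref{tt}, a single-term restriction for the lower bound, and the bound of the sum by $L$ times its largest term combined with the superadditivity-derived inequality $\tfrac1N\log\mathbf{P}_{\beta}(\mathcal{V}_{N,\alpha N})\le g_\beta(\alpha)$ (the paper's \eqref{eq:supadd}) for the upper bound. The reconciliation of the equality and inequality constraints via monotonicity of $g_\beta$, which you flag as the delicate point, is exactly what the paper carries out at the end of Section \ref{creg}.
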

A consequence of Theorem \ref{Thm1} is that there exists a critical point $\beta^\mathsf{m}_c>0$ at which the polymer undergoes the collapse transition (Fig.~\ref{fig:crit}) and that $\beta_c^\mathsf{m}$ can be computed explicitly. 

\begin{theorem}[Critical point]\label{Thm2}
For $\mathsf{m}\in\{\mathsf{u},\mathsf{nu}\}$, there exists a $\beta_c^\mathsf{m}\in(0,\infty)$ such that 
\begin{equation}
\tilde{f}^\mathsf{m}(\beta)\begin{dcases*}
	=0, & if $\beta\geq\beta^\mathsf{m}_c$,\\
  >0, & if $\beta<\beta^\mathsf{m}_c$,
  \end{dcases*}
\end{equation}
and $\beta_c^\mathsf{m}$ is the unique positive solution of the equation $\Gamma^\mathsf{m}(\beta)=1$.
\end{theorem}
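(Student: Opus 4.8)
The plan is to read everything off the variational formula of Theorem~\ref{Thm1}, so that Theorem~\ref{Thm2} becomes essentially a corollary. For fixed $\beta$ and $\mathsf{m}$ I would write
\[
\Phi_\beta(\alpha):=\alpha\log\big(\Gamma^\mathsf{m}(\beta)\big)+\alpha\,g_\beta\big(\tfrac{1-\alpha}{\alpha}\big),\qquad \alpha\in(0,1],
\]
so that $\tilde f^\mathsf{m}(\beta)=\sup_{\alpha\in[0,1]}\Phi_\beta(\alpha)$; the whole argument hinges on the behaviour of $\Phi_\beta$ near $\alpha=0$. First I would record two facts about $g_\beta$ established in Section~\ref{gbeta}: that $g_\beta$ is \emph{strictly negative} on $[0,\infty)$, and that $g_\beta(x)\to 0$ as $x\to\infty$. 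Since $(1-\alpha)/\alpha\to\infty$ as $\alpha\to 0^+$, the second fact gives $\Phi_\beta(\alpha)\to 0$, so the supremum is always at least $0$ (consistent with $\tilde f^\mathsf{m}\geq 0$ from Lemma~\ref{le1}), the value $0$ being approached as $\alpha\to 0^+$.

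The core step is a dichotomy driven by the sign of $\log\Gamma^\mathsf{m}(\beta)$, obtained by factoring out $\alpha$:
\[
\Phi_\beta(\alpha)=\alpha\Big[\log\big(\Gamma^\mathsf{m}(\beta)\big)+g_\beta\big(\tfrac{1-\alpha}{\alpha}\big)\Big],\qquad \alpha\in(0,1].
\]
If $\Gamma^\mathsf{m}(\beta)\leq 1$, then $\log\Gamma^\mathsf{m}(\beta)\leq 0$ and, because $g_\beta<0$, the bracket is strictly negative for every $\alpha\in(0,1]$; hence $\Phi_\beta(\alpha)<0$ throughout and, combined with the boundary behaviour above, $\sup_{[0,1]}\Phi_\beta=0$, i.e. $\tilde f^\mathsf{m}(\beta)=0$. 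If instead $\Gamma^\mathsf{m}(\beta)>1$, then $\log\Gamma^\mathsf{m}(\beta)>0$ and, as $\alpha\to 0^+$, the term $g_\beta((1-\alpha)/\alpha)\to 0$, so the bracket is strictly positive for all sufficiently small $\alpha$; for such $\alpha$ one has $\Phi_\beta(\alpha)>0$, whence $\tilde f^\mathsf{m}(\beta)\geq\Phi_\beta(\alpha)>0$. This establishes the equivalences $\tilde f^\mathsf{m}(\beta)=0\iff\Gamma^\mathsf{m}(\beta)\leq 1$ and $\tilde f^\mathsf{m}(\beta)>0\iff\Gamma^\mathsf{m}(\beta)>1$.

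To finish, I would invoke that $\beta\mapsto\Gamma^\mathsf{m}(\beta)$ is a continuous, strictly decreasing bijection of $(0,\infty)$ onto $(0,\infty)$, as stated after \eqref{eq:partfuncp1}. This produces a unique $\beta_c^\mathsf{m}\in(0,\infty)$ with $\Gamma^\mathsf{m}(\beta_c^\mathsf{m})=1$, and monotonicity converts $\Gamma^\mathsf{m}(\beta)\leq 1$ into $\beta\geq\beta_c^\mathsf{m}$ and $\Gamma^\mathsf{m}(\beta)>1$ into $\beta<\beta_c^\mathsf{m}$, giving exactly the claimed phase picture together with the characterisation of $\beta_c^\mathsf{m}$ as the unique positive root of $\Gamma^\mathsf{m}(\beta)=1$. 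There is no genuinely hard step here; the only point demanding care is the endpoint analysis at $\alpha=0$, namely justifying $\Phi_\beta(\alpha)\to 0$ and exhibiting an explicit admissible $\alpha$ with $\Phi_\beta(\alpha)>0$ in the extended phase. Both rest entirely on the limit $g_\beta(x)\to 0$ as $x\to\infty$ and on the strict negativity of $g_\beta$ supplied by Section~\ref{gbeta}, so the genuine work has already been done in Theorem~\ref{Thm1} and in the analysis of $g_\beta$.
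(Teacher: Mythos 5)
Your argument is correct and follows essentially the same route as the paper: deduce from the variational formula and the negativity of $g_\beta$ that $\tilde f^\mathsf{m}(\beta)=0$ when $\Gamma^\mathsf{m}(\beta)\leq 1$, use $g_\beta(x)\to 0$ as $x\to\infty$ to produce a small $\alpha$ with a strictly positive value when $\Gamma^\mathsf{m}(\beta)>1$, and conclude via the monotonicity of $\beta\mapsto\Gamma^\mathsf{m}(\beta)$. The only cosmetic difference is that you obtain $\sup\Phi_\beta=0$ from the boundary behaviour at $\alpha\to 0^+$ where the paper simply invokes the lower bound $\tilde f^\mathsf{m}\geq 0$ from Lemma~\ref{le1}; both are valid.
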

By recalling \eqref{lawP} and \eqref{sqq}, we observe that the equation $\Gamma^\mathsf{nu}(\beta)=1$ is equivalent to the equation $3x^3-3x^2-2x-2=0$ where $x=e^{\beta/2}$. Moreover, the cubic polynomial $3x^3-3x^2-2x-2$ has a unique positive zero $x_c$, so that $\beta_c^\mathsf{nu}=2\log x_c$. This value of $\beta_c^\mathsf{nu}$ corresponds to the value provided in \cite{BGW92}.
\begin{figure}[ht]\centering
	\includegraphics[width=.5\textwidth]{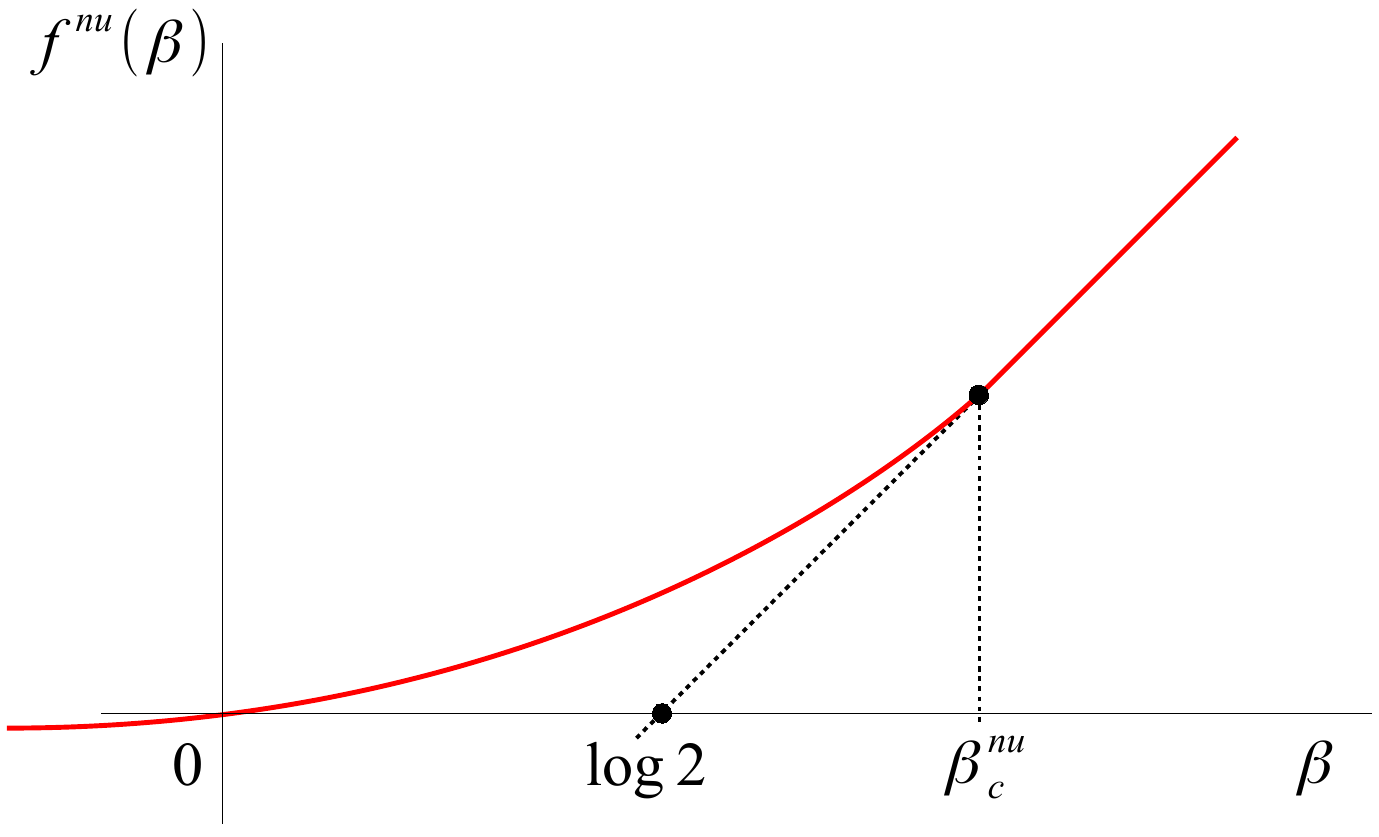}
	\caption{Phase diagram in the the non-uniform case ($\beta_c^\mathsf{nu}\approx1$).}
	\label{fig:crit}
\end{figure}

\begin{theorem}[Order of the phase transition]\label{Thm3}
The phase transition is of order $3/2$. That is, there exist two constants $c_1,c_2>0$ such that for $\epsilon$ small enough
\begin{equation}
c_1\,\epsilon^{3/2}\leq\tilde{f}^\mathsf{m}(\beta_c^\mathsf{m}-\epsilon)\leq c_2\,\epsilon^{3/2}.
\end{equation}
\end{theorem}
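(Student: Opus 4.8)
The plan is to read Theorem~\ref{Thm3} off the variational formula \eqref{eq:main} once one controls how fast $g_\beta(\alpha)$ tends to $0$ as $\alpha\to\infty$. First I would rewrite \eqref{eq:main} in a single variable: with $u=\tfrac{1-\alpha}{\alpha}\in[0,\infty)$ and $\alpha=\tfrac1{1+u}$, it becomes
\[
\tilde f^\mathsf{m}(\beta)=\sup_{u\ge0}\frac{\log\Gamma^\mathsf{m}(\beta)+g_\beta(u)}{1+u},
\]
the value at $u=\infty$ (i.e. $\alpha=0$) being $0$. Since $\Gamma^\mathsf{m}$ in \eqref{sqq}--\eqref{lawP} is explicit, smooth and strictly decreasing with $\Gamma^\mathsf{m}(\beta_c^\mathsf{m})=1$ by Theorem~\ref{Thm2}, writing $\epsilon=\beta_c^\mathsf{m}-\beta$ gives $\log\Gamma^\mathsf{m}(\beta_c^\mathsf{m}-\epsilon)=D_\mathsf{m}\,\epsilon+O(\epsilon^2)$ with $D_\mathsf{m}:=-(\log\Gamma^\mathsf{m})'(\beta_c^\mathsf{m})>0$; in particular $\log\Gamma^\mathsf{m}(\beta_c^\mathsf{m}-\epsilon)\asymp\epsilon$ for $\epsilon$ small.

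The crux is an estimate on the decay rate of $g_\beta$, which I would isolate as a lemma: there exist $0<A_1\le A_2<\infty$, $u_0\ge1$ and a neighbourhood $I$ of $\beta_c^\mathsf{m}$ such that $-A_2/u^2\le g_\beta(u)\le-A_1/u^2$ for all $u\ge u_0$ and $\beta\in I$. Granting this, both inequalities of Theorem~\ref{Thm3} follow from elementary one-variable optimisations. For the lower bound I would insert $u=t\,\epsilon^{-1/2}$ (with $t$ a fixed large constant) into the displayed supremum and use $g_\beta(u)\ge-A_2/u^2$, obtaining
\[
\tilde f^\mathsf{m}(\beta_c^\mathsf{m}-\epsilon)\ \ge\ \frac{\epsilon\,(D_\mathsf{m}-A_2/t^2)}{1+t\,\epsilon^{-1/2}}\ \ge\ c_1\,\epsilon^{3/2}
\]
once $t^2>A_2/D_\mathsf{m}$ and $\epsilon$ is small. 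For the upper bound, the summand is negative for $u\le u_0$ (as $g_\beta$ is increasing and $\log\Gamma^\mathsf{m}\to0$) and, for $u\ge u_0$, negative unless $g_\beta(u)\ge-\log\Gamma^\mathsf{m}(\beta)$, which by $g_\beta(u)\le-A_1/u^2$ forces $u\ge(A_1/(D_\mathsf{m}\epsilon))^{1/2}$; on that range, bounding $\tfrac1{1+u}\le\tfrac1u$ and maximising $u\mapsto\tfrac1u(D_\mathsf{m}\epsilon-A_1/u^2)$ (maximiser $u\asymp\epsilon^{-1/2}$) yields $\tilde f^\mathsf{m}(\beta_c^\mathsf{m}-\epsilon)\le c_2\,\epsilon^{3/2}$.

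It remains to prove the lemma, where the real work lies. The lower bound $g_\beta(u)\ge-A_2/u^2$ is the easy half: confining $V$ to $[-u,u]$ already forces $\sum_{i=1}^N|V_i|\le uN$, so $g_\beta(u)$ is bounded below by $\log$ of the principal eigenvalue of the increment kernel killed outside $[-u,u]$, and a standard finite-interval spectral-gap estimate gives this eigenvalue $\ge1-O(1/u^2)$ (the return constraint $V_N=0$ costing only a polynomial factor). The hard half is the matching upper bound $g_\beta(u)\le-A_1/u^2$, i.e. that keeping the area small genuinely costs an exponential rate of order $1/u^2$. Here I would argue by a block decomposition: cut $\{1,\dots,N\}$ into $\sim N/m$ blocks of length $m=\lceil Ku^2\rceil$; on $\{\sum_{i=1}^N|V_i|\le uN\}$ at least half the blocks satisfy $\sum_{i\in\text{block}}|V_i|\le2um$, while a single block has this property with probability at most some $\rho=\rho(K)$ made $<1/4$ by taking $K$ large (over a block of length $\sim Ku^2$ the walk spreads to width $\sim\sqrt K\,u\gg u$). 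Multiplying the conditional per-block bounds via the Markov property and a union bound over the choice of "good" blocks gives $\mathbf{P}_\beta(\sum_{i=1}^N|V_i|\le uN)\le(2\sqrt\rho)^{\,N/m}$, whence $g_\beta(u)\le-c/(Ku^2)$.

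I expect the main obstacle to be this last step: the uniform single-block estimate $\sup_a\mathbf{P}_\beta(\sum_{i<m}|a+V_i|\le2um)\le\rho(K)$, uniform in the starting height $a$ and with $\rho(K)\to0$ as $K\to\infty$, since it is precisely the statement that area confinement is costly and must be made rigorous at the discrete level (a local-CLT / invariance-principle input for the geometric walk under $\mathbf{P}_\beta$). Finally I would check that $A_1,A_2,u_0$ can be chosen uniformly for $\beta$ in a neighbourhood of $\beta_c^\mathsf{m}$, using continuity and monotonicity of $\beta\mapsto\mathbf{P}_\beta$, so that the two optimisations above may legitimately be run at $\beta=\beta_c^\mathsf{m}-\epsilon$.
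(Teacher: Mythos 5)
Your proposal is correct and its overall architecture coincides with the paper's: both deduce Theorem \ref{Thm3} from the variational formula \eqref{eq:main} combined with two-sided asymptotics $-c_1/\alpha^2\leq g_\beta(\alpha)\leq-c_2/\alpha^2$ for large $\alpha$ (the paper's Proposition \ref{gbounded}, proved via Lemmas \ref{rwlb} and \ref{rwub} by essentially the confinement and block coarse-graining arguments you sketch, including your uniform single-block estimate, which is Steps 1--3 of Section \ref{pl2}), followed by the elementary optimisation of $\alpha\mapsto c\epsilon\alpha-c'\alpha^3$ at $\alpha\asymp\sqrt{\epsilon}$. The one substantive divergence is in the lower bound $\tilde f^{\mathsf m}(\beta_c^{\mathsf m}-\epsilon)\geq c_1\epsilon^{3/2}$: you evaluate the variational formula directly at $\beta=\beta_c^{\mathsf m}-\epsilon$, so you need $g_\beta(u)\geq-A_2/u^2$ \emph{uniformly} for $\beta$ in a neighbourhood of $\beta_c^{\mathsf m}$ --- a point you rightly flag, since the paper proves this lower bound only pointwise in $\beta$ (the constant in Lemma \ref{rwlb} comes from a Donsker limit and depends on $\beta$). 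The paper sidesteps the issue: starting from the stretch representation \eqref{ls} at $\beta_c^{\mathsf{nu}}-\epsilon$, it bounds the interaction weights below by those at $\beta_c^{\mathsf{nu}}$ and uses $\Gamma^{\mathsf{nu}}(\beta_c^{\mathsf{nu}})=1$ to get $\tilde f^{\mathsf{nu}}(\beta_c^{\mathsf{nu}}-\epsilon)\geq\sup_\alpha\bigl[\alpha\epsilon+\alpha\,g_{\beta_c^{\mathsf{nu}}}\bigl(\tfrac{1-\alpha}{\alpha}\bigr)\bigr]$, so only the asymptotics of $g$ at the single value $\beta=\beta_c^{\mathsf{nu}}$ are needed. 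Your route is workable (the Donsker input can be made uniform on compacts since $\sigma_\beta$ is bounded away from $0$ and $\infty$ there), but the paper's comparison trick buys that uniformity for free.
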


\section{A new representation of the partition function}\label{na}

With Proposition \ref{tt} below, we give a rigorous statement of the formula \eqref{eq:partfuncp1} which is the corner stone of our paper.

We need to settle a few notations before stating Proposition \ref{tt}. For that, we recall \eqref{lawP} and we let $V=(V_i)_{i\geq 0}$ be a random walk of law $\mathbf{P}_{\beta}$. We let
\begin{equation}\label{defA}
\textstyle A_n:=\sum_{i=1}^{n}|V_i|
\end{equation}
represent the area enclosed between the random walk $V$ and the horizontal line $y=0$. We let also $\mathcal{V}_{n,k}$ be the subset containing those trajectories that return to the origin after $n$ steps and satisfy $A_n=k$, that is
\begin{equation}\label{defVV}
\mathcal{V}_{n,k}:=\{(V)_{i=0}^n:\,V_n=0,\,A_n=k\}.
\end{equation}
Finally, we recall the definition of $\Gamma^\mathsf{m}(\beta)$ in \eqref{sqq} and we set
\begin{equation}\label{sqq1}
	\begin{dcases*}
	\Phi^\mathsf{u}_{L,\beta}=e^{\beta L}/|\mathcal{W}_L|,\\
	\Phi^\mathsf{nu}_{L,\beta}=(e^\beta/2)^L.
  \end{dcases*}
\end{equation}

\begin{proposition}\label{tt}
For $\beta>0$, $L\in\mathbb{N}$, $\mathsf{m}\in\{\mathsf{u},\mathsf{nu}\}$, we have
\begin{equation}\label{eq:partfunc}
Z^\mathsf{m}_{L,\beta}=c_\beta\,\Phi^\mathsf{m}_{L,\beta}\sum_{N=1}^{L}\left(\Gamma^\mathsf{m}(\beta)\right)^N\mathbf{P}_{\beta}(\mathcal{V}_{N+1,L-N}).
\end{equation}
\end{proposition}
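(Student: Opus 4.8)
The plan is to split $\mathcal{W}_L$ according to the number $N$ of horizontal steps of a trajectory and to encode each trajectory with $N$ horizontal steps by the signed lengths $\ell_1,\dots,\ell_N\in\mathbb{Z}$ of its $N$ successive vertical stretches, the $j$-th stretch being the one performed just before the $j$-th horizontal step. Since the walk never steps west and each column is visited along a single stretch, the map $w\mapsto(\ell_1,\dots,\ell_N)$ is a bijection onto $\{(\ell_1,\dots,\ell_N)\in\mathbb{Z}^N:\sum_{j=1}^N|\ell_j|=L-N\}$, the constraint recording that the $L-N$ vertical steps are split among the $N$ stretches. First I would clear out the entropic weights. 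Inspecting the transition rules of the non-uniform law shows that a stretch of length $u\ge 0$ together with its closing east step carries weight $\tfrac13(\tfrac12)^u$ (the value at $u=0$ being $\tfrac13$ as well), whence $\mathbf{P}^{\mathsf{nu}}_L(w)=(\tfrac13)^N(\tfrac12)^{L-N}$ depends on $w$ only through $N$; for the uniform law the weight is simply the constant $1/|\mathcal{W}_L|$. The fact that in both cases the weight is a function of $N$ alone is what lets the contribution of each $N$ factorise.

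The core of the proof is the algebraic rewriting of $H_{L,\beta}$. I would first check that two consecutive columns produce a self-touching only when their stretches point in opposite vertical directions, and that the number of such contacts equals $\tfrac12\big(|\ell_j|+|\ell_{j+1}|-|\ell_j+\ell_{j+1}|\big)$, so that $H_{L,\beta}(w)=\beta\sum_{j=1}^{N-1}\tfrac12\big(|\ell_j|+|\ell_{j+1}|-|\ell_j+\ell_{j+1}|\big)$. The decisive move is to introduce the sign-flipped variables $V_j:=(-1)^j\ell_j$ and to complete them into a bridge by setting $V_0:=0$ and imposing $V_{N+1}:=0$: the alternating signs turn ``opposite directions'' into ``equal signs'', so that the contact between two consecutive stretches becomes $\tfrac12\big(|V_j|+|V_{j+1}|-|v_{j+1}|\big)$ with $v_{j+1}:=V_{j+1}-V_j$. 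Because $V_0=V_{N+1}=0$ the two boundary contributions vanish, and after reindexing the contact sum telescopes, giving $H_{L,\beta}(w)=\beta\sum_{j=1}^{N+1}\tfrac12\big(|V_{j-1}|+|V_j|-|v_j|\big)=\beta(L-N)-\tfrac{\beta}{2}\sum_{j=1}^{N+1}|v_j|$, where I have used $\sum_{j=1}^{N+1}|V_j|=L-N$. Hence $e^{H_{L,\beta}(w)}=e^{\beta(L-N)}\,e^{-\frac{\beta}{2}\sum_{j=1}^{N+1}|v_j|}$.

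It then remains to sum over trajectories. As $\ell\mapsto V$ is just a sign change, it is a bijection between the lists $(\ell_1,\dots,\ell_N)$ with $\sum|\ell_j|=L-N$ and the bridges $(V_0,\dots,V_{N+1})$ with $V_0=V_{N+1}=0$ and $\sum_{j=1}^{N+1}|V_j|=L-N$, that is, the trajectories in $\mathcal{V}_{N+1,L-N}$. Recognising $e^{-\frac{\beta}{2}\sum_{j=1}^{N+1}|v_j|}$ as $c_\beta^{\,N+1}$ times the product of the increment probabilities in \eqref{lawP}, the sum over these bridges equals $e^{\beta(L-N)}c_\beta^{\,N+1}\mathbf{P}_\beta(\mathcal{V}_{N+1,L-N})$. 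Multiplying by the weight of $N$ found above and collecting all $\beta$-dependent constants into the factors $\Phi^{\mathsf{m}}_{L,\beta}$ and $\Gamma^{\mathsf{m}}(\beta)$ of \eqref{sqq1} and \eqref{sqq} yields \eqref{eq:partfunc}. I expect the main obstacle to be the middle paragraph: pinning down the contact count in every configuration (in particular the degenerate stretches of length zero, i.e. two consecutive east steps, and the role played by the first and last columns) and discovering the sign flip $V_j=(-1)^j\ell_j$ that simultaneously linearises the energy into $\sum_j|v_j|$ and is compatible with the closing condition $V_{N+1}=0$. The subsequent identification of constants is routine bookkeeping, which I would double-check on the base case $L=1$, where both sides equal $\tfrac13$ for $\mathsf{m}=\mathsf{nu}$.
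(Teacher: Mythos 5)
Your proposal is correct and follows essentially the same route as the paper: decomposition of the walk into $N$ vertical stretches closed by east steps, the observation that the law on $\mathcal{W}_L$ depends only on $N$, the contact count $\tfrac12\bigl(|\ell_j|+|\ell_{j+1}|-|\ell_j+\ell_{j+1}|\bigr)$, and the alternating-sign change of variables turning the energy into $\beta(L-N)-\tfrac{\beta}{2}\sum_j|v_j|$ and the configuration sum into $c_\beta^{N+1}\mathbf{P}_\beta(\mathcal{V}_{N+1,L-N})$. The only cosmetic difference is your sign convention $V_j=(-1)^j\ell_j$ versus the paper's $v_n=(-1)^{n-1}(l_{n-1}+l_n)$, which is immaterial by symmetry of the increment law.
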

\begin{proof}
We will display the details of the proof in the non-uniform case only. The uniform case is indeed easier to handle because the probability associated with each trajectory in $\mathcal{W}_L$ is constant. The walk can be decomposed into $N$ stretches $\gamma_1,\ldots,\gamma_N$, each of them consisting of a vertical part of length $l\in\mathbb{Z}$ and of one horizontal step. Thus, with each configuration $w\in\mathcal{W}_L$, we associate the sequence $l:=(l_1,\ldots,l_N)\in\mathbb{Z}^N$ such that $N$ is the number of vertical stretches made by $w$ and $l_i$ corresponds to the vertical length of the $i^{th}$ stretch for $i\in\{1,\dots,N\}$ (Fig.~\ref{fig:stretches}). At this stage, we have a one-to-one correspondence between $\mathcal{W}_L$ and $\Omega_L:=\bigcup_{N=1}^L\mathcal{L}_{N,L}$, where $\mathcal{L}_{N,L}$ is the set of all possible configurations consisting of $N$ vertical stretches that have a total length $L$, that is
\begin{equation}
\textstyle\mathcal{L}_{N,L}=\Bigl\{l\in\mathbb{Z}^N:\sum_{n=1}^N|l_n|+N=L\Bigr\}.
\end{equation}
\begin{figure}[hb]\centering
	\includegraphics[width=.32\textwidth]{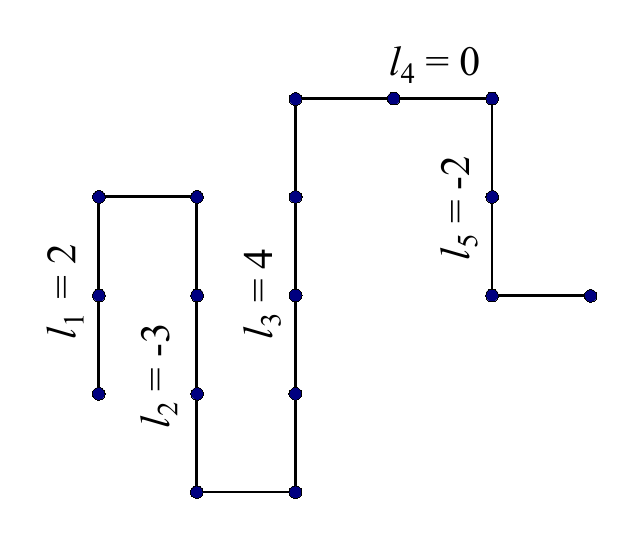}
	\caption{An example of decomposition of $w\in\mathcal{W}_{16}$ into 5 stretches.}
	\label{fig:stretches}
\end{figure}\\
By recalling the definition of $\mathbf{P}^\mathsf{nu}_L$ in Section \ref{model}, we note that the function $l\mapsto\mathbf{P}^\mathsf{nu}_L(l)$ is constant equal to $(2/3)^N (1/2)^L$ on each subset $\mathcal{L}_{N,L}$. Moreover, the Hamiltonian associated with a stretch configuration $l:=(l_1,\ldots,l_N)$ is given by
\begin{equation}
\textstyle H_{L,\beta}(l_1,\ldots,l_N)=\beta\sum_{n=1}^{N-1}(l_n\;\tilde{\wedge}\;l_{n+1})
\end{equation}
where
\begin{equation}
x\;\tilde{\wedge}\;y=\begin{dcases*}
	|x|\wedge|y| & if $xy<0$,\\
  0 & otherwise.
  \end{dcases*}
\end{equation}
The one-to-one correspondance between $\Omega_L$ and $\mathcal{W}_L$ allows us to rewrite the partition function in terms of the stretches, i.e.,
\begin{equation}\label{pf}
Z^\mathsf{nu}_{L,\beta}=\sum_{N=1}^{L}\sum_{l\in\mathcal{L}_{N,L}}\left(\tfrac{1}{3}\right)^N\left(\tfrac{1}{2}\right)^{L-N}e^{\beta\sum_{i=1}^{N-1}(l_i\;\tilde{\wedge}\;l_{i+1})}.
\end{equation}
At this stage, it is useful to remark that the $\tilde{\wedge}$ operator can be written as
\begin{equation}
x\;\tilde{\wedge}\;y=\left(|x|+|y|-|x+y|\right)/2,\quad\forall x,y\in\mathbb{Z}.
\end{equation}
Hence, for $\beta>0$ and $L\in\mathbb{N}$, the partition function in \eqref{pf} becomes
\begin{align}\label{ls}
\nonumber Z^\mathsf{nu}_{L,\beta}
&=\sum_{N=1}^{L}\left(\tfrac{1}{3}\right)^N\left(\tfrac{1}{2}\right)^{L-N}\sum_{\substack{l\in\mathcal{L}_{N,L}\\l_0=l_{N+1}=0}}\exp{\Bigl(\beta\sum_{n=1}^N{|l_n|}-\tfrac{\beta}{2}\sum_{n=0}^N{|l_n+l_{n+1}|}\Bigr)}\\
&=\left(\tfrac{e^\beta}{2}\right)^L\sum_{N=1}^{L}c_\beta\left(\tfrac{2c_\beta}{3e^\beta}\right)^N\sum_{\substack{l\in\mathcal{L}_{N,L}
\\l_0=l_{N+1}=0}}\prod_{i=0}^{N}\frac{\exp{\Bigl(-\tfrac{\beta}{2}|l_i+l_{i+1}|\Bigr)}}{c_\beta},
\end{align}
where $c_\beta$ was defined in \eqref{lawP}.
By rewriting the last sum in \eqref{ls} in terms of $v_n:=(-1)^{n-1}(l_{n-1}+l_n)$, $n=1,\ldots,N+1$, we see that this sum is equal to the probability that the random walk $(V_n)_{n\in\mathbb{N}}$ belongs to $\mathcal{V}_{N+1,L-N}$ (Fig.~\ref{fig:rwv}). Therefore
\begin{equation}
Z^\mathsf{nu}_{L,\beta}=c_\beta\left(\tfrac{e^\beta}{2}\right)^L\sum_{N=1}^{L}\left(\tfrac{2c_\beta}{3e^\beta}\right)^N\mathbf{P}_{\beta}(\mathcal{V}_{N+1,L-N}).
\end{equation}
\begin{figure}[ht]\centering
	\includegraphics[width=.8\textwidth]{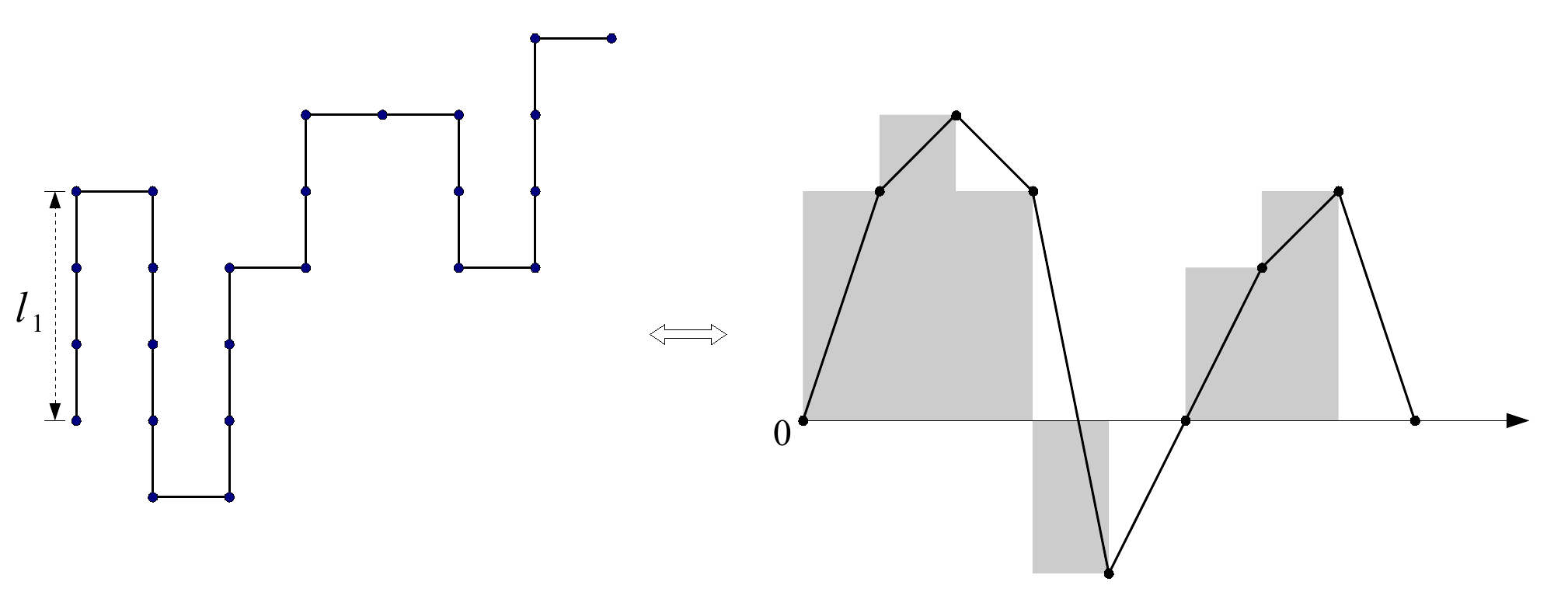}
  \caption{An example of transformation of $w\in \mathcal{W}_{24}$ into $(V_n)_{n=0}^8\in \mathcal{V}_{8,17}$. 
The 24-step trajectory $w$ on the left has 7 stretches: $l_1=3,l_2=-4,l_3=3,l_4=2,l_5=0,l_6=-2$ and $l_7=3$. The correspondent increments of $(V_n)_{n=0}^8$ are: $v_1=3,v_2=1,v_3=-1,v_4=-5,v_5=2,v_6=2$ and $v_7=1$.}
  \label{fig:rwv}
\end{figure}
\end{proof}

\section{Construction and asymptotics of \texorpdfstring{$g_\beta$}{g}}\label{gbeta}

In Section \ref{creg}, we construct rigorously the entropic function $g_\beta$ and we study its regularity and monotonicity. In section \ref{assym}, we focus on the asymptotic behavior of $g_\beta(\alpha)$ when $\alpha\to\infty$.

\subsection{Construction and regularity of \texorpdfstring{$g_\beta$}{g}}\label{creg}
We now define the function $g_\beta$ in a sligthly different way from \eqref{eq:glimit1}, but we will see at the end of section \ref{creg} that the two definitions are equivalent. Recall \eqref{defVV} and for each $\alpha\in\mathbb{Q}^+:=\mathbb{Q}\cap[0,\infty)$, let 
\begin{equation}\label{nalpha}
N_\alpha:=\{n\in\mathbb{N}\cap[2,\infty):\,\alpha n\in\mathbb{N}\}.
\end{equation}
Note that $\mathbf{P}_{\beta}(\mathcal{V}_{N,\alpha K})>0$ for all $N\in\mathbb{N}\cap[2,\infty)$ and $K\in N_\alpha$. Let $g_\beta:\mathbb{Q}^+\to\mathbb{R}$ be defined as 
\begin{equation}\label{eq:glimit}
g_\beta(\alpha):=\lim_{\substack{N\in N_\alpha\\N\to\infty}}g_{N,\beta}(\alpha),\quad\text{where}\quad g_{N,\beta}(\alpha):=\frac{1}{N}\log\mathbf{P}_{\beta}(\mathcal{V}_{N,\alpha N}).
\end{equation}
To study the properties of $g_\beta$, we will use that 
\begin{equation}\label{propV1}
\mathbf{P}_{\beta}(\mathcal{V}_{N_1+N_2,K_1+K_2})\geq\mathbf{P}_{\beta}(\mathcal{V}_{N_1,K_1})\mathbf{P}_{\beta}(\mathcal{V}_{N_2,K_2}), \quad\text{for}\; N_1,N_2,K_1,K_2\in\mathbb{N}.
\end{equation}
To prove \eqref{propV1}, we simply restrict the set $\mathcal{V}_{N_1+N_2,K_1+K_2}$ to those trajectories that return to origin at time $N_1$ and satisfy $A_{N_1}=K_1$. Then, by using the Markov property of the pair process $\{V_n,A_n\}_{n\in\mathbb{N}}$, we obtain the result.

\begin{lemma}\label{funcg}
(i) $g_\beta(\alpha)$ exists and is finite, nonpositive for all $\alpha\in\mathbb{Q}^+$. In particular, $g_\beta(0)=-\log c_\beta$.\\
(ii) $\alpha\mapsto g_\beta(\alpha)$ is continuous, concave, nondecreasing on $\mathbb{Q}^+$ and tends to $0$ as $\alpha\to \infty$.
\end{lemma}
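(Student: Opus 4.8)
The plan is to derive both properties of $g_\beta$ from the superadditivity relation \eqref{propV1} together with elementary monotonicity and convexity manipulations of the sets $\mathcal{V}_{N,K}$. For part (i), the key observation is that \eqref{propV1} implies that for fixed rational $\alpha$ the sequence $N\mapsto\log\mathbf{P}_{\beta}(\mathcal{V}_{N,\alpha N})$ is superadditive along $N\in N_\alpha$ (since adding $N_1$ and $N_2$ produces the area $\alpha N_1+\alpha N_2=\alpha(N_1+N_2)$), so by Fekete's lemma the limit in \eqref{eq:glimit} exists and equals the supremum $\sup_{N\in N_\alpha}g_{N,\beta}(\alpha)$. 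Finiteness and nonpositivity follow immediately since $\mathbf{P}_{\beta}(\mathcal{V}_{N,\alpha N})\in(0,1]$, giving $g_{N,\beta}(\alpha)\le 0$, while the lower bound comes from restricting to the single trajectory realizing the area (or from \eqref{propV1} applied to small blocks). The special value $g_\beta(0)=-\log c_\beta$ is a direct computation: $\mathcal{V}_{N,0}$ forces all $V_i=0$, i.e. all increments vanish, so $\mathbf{P}_{\beta}(\mathcal{V}_{N,0})=\mathbf{P}_{\beta}(v_1=0)^N=c_\beta^{-N}$, and $\tfrac1N\log$ of this is exactly $-\log c_\beta$.

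For part (ii), I would first establish concavity on $\mathbb{Q}^+$. Given rationals $\alpha_1,\alpha_2$ and a rational convex weight, pick $N\in N_{\alpha_1}\cap N_{\alpha_2}$ large and split a trajectory of length $N_1+N_2$ into a first block realizing area $\alpha_1 N_1$ and a second realizing $\alpha_2 N_2$; relation \eqref{propV1} then yields the midpoint (and, by iteration with rational weights, general rational-convex) inequality $g_\beta(\lambda\alpha_1+(1-\lambda)\alpha_2)\ge\lambda g_\beta(\alpha_1)+(1-\lambda)g_\beta(\alpha_2)$ after dividing by $N_1+N_2$ and passing to the limit. Monotonicity is even simpler: since a larger area constraint corresponds to a strictly larger index set, the inclusion $\mathcal{V}_{N,K}\subseteq\mathcal{V}_{N,K'}$ fails directly, so instead I would observe that one can always pad a trajectory with area $K$ up to area $K'>K$ by inserting extra excursions at an asymptotically negligible entropic cost, or more cleanly use \eqref{propV1} to write $\mathbf{P}_{\beta}(\mathcal{V}_{N_1+N_2,K_1+K_2})\ge\mathbf{P}_{\beta}(\mathcal{V}_{N_1,K_1})\mathbf{P}_{\beta}(\mathcal{V}_{N_2,K_2})$ and compare limits, giving $g_\beta$ nondecreasing. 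Continuity on the interior of $\mathbb{Q}^+$ is then automatic from concavity; care is only needed at the endpoint $\alpha=0$.

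The convergence $g_\beta(\alpha)\to 0$ as $\alpha\to\infty$ requires a genuine upper and lower estimate. The lower bound is clear since $g_\beta\le 0$. For the matching upper bound I would exhibit, for large $\alpha$, trajectories of length $N$ returning to $0$ with area at most $\alpha N$ that carry probability $e^{o(N)}$: for instance, allowing each increment to range over an interval of width growing with $\alpha$ gives $\mathbf{P}_{\beta}(\mathcal{V}_{N,\alpha N})\ge\big(\mathbf{P}_{\beta}(|v_1|\le c)\big)^{N}$ for a suitable $c=c(\alpha)\to\infty$, whence $g_\beta(\alpha)\ge\log\mathbf{P}_{\beta}(|v_1|\le c)\to 0$. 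Combining the two bounds forces the limit to be $0$.

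The main obstacle I anticipate is the bookkeeping in the concavity argument: one must choose a common subsequence $N\in N_{\alpha_1}\cap N_{\alpha_2}$ and integers $N_1,N_2$ with $N_1/(N_1+N_2)\to\lambda$ while keeping both areas integral and both block-areas of the required form, so that \eqref{propV1} applies exactly. This is a standard but slightly delicate rational-approximation and divisibility issue; once it is handled, concavity, and hence interior continuity, follow, and the remaining monotonicity and limit statements are comparatively routine. The extension of $g_\beta$ from $\mathbb{Q}^+$ to $[0,\infty)$ and the equivalence with definition \eqref{eq:glimit1} (where the area constraint is an inequality rather than an equality) would be postponed, as suggested by the text, to the end of the section.
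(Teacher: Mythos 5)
Parts (i) and the concavity argument in (ii) match the paper's proof (Fekete's lemma applied to the superadditive relation \eqref{propV1}, then \eqref{propV1} again with $p$ blocks at density $\alpha_1$ and $q-p$ blocks at density $\alpha_2$), and your computation of $g_\beta(0)$ is exactly the one in the paper. The divisibility bookkeeping you worry about is handled in the paper by working with a common $N\in N_{\alpha_1}\cap N_{\alpha_2}$ and integer weights $p/q$, as you suggest, so that part is fine.

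There are, however, two genuine gaps in the remaining claims. First, your argument that $g_\beta(\alpha)\to 0$ is wrong: the bound $\mathbf{P}_{\beta}(\mathcal{V}_{N,\alpha N})\geq\bigl(\mathbf{P}_{\beta}(|v_1|\leq c)\bigr)^N$ does not hold. Constraining the \emph{increments} to $|v_i|\leq c$ neither forces $V_N=0$ nor controls the area: a walk with bounded increments is still diffusive, so its typical area $A_N=\sum_{i\leq N}|V_i|$ is of order $N^{3/2}$, not $\alpha N$. The event $\{A_N\leq\alpha N\}$ is a moderate-deviation event whose probability decays like $e^{-\Theta(N/\alpha^2)}$ for every fixed $\alpha$ (this is precisely the content of Lemmas \ref{rwlb} and \ref{rwub}), so $g_\beta(\alpha)<0$ for all finite $\alpha$ and the convergence to $0$ requires a real estimate. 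The paper proves it by contradiction: assuming $g_\beta\leq -M$ for all large $\alpha$ gives, via \eqref{eq:supadd}, the uniform bound $\mathbf{P}_{\beta}(\mathcal{V}_{N,\alpha N})\leq e^{-NM}$, which is then contradicted by a lower bound on the probability of a confinement event $\mathcal{N}_\alpha$ obtained from Kolmogorov's maximal inequality with $\alpha$ of order $\sqrt{N}$. Some argument of this strength (or the full Lemma \ref{rwlb}) is unavoidable here.

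Second, your monotonicity argument does not close. Since $\mathcal{V}_{N,K}$ is defined by the \emph{equality} $A_N=K$, there is no inclusion, as you note; but neither "padding with excursions at negligible entropic cost" nor a direct application of \eqref{propV1} works, because raising the area density from $\alpha$ to $\alpha'>\alpha$ requires inserting an extra area of order $(\alpha'-\alpha)N$, which costs either $e^{-cN}$ in probability (if done with $o(N)$ steps) or reduces again to controlling $\epsilon\,g_\beta(C/\epsilon)$ as $\epsilon\to0$, i.e.\ to the behaviour of $g_\beta$ at infinity. The paper's route is the reverse of yours: it first proves $g_\beta(\alpha)\to0$, and then monotonicity is \emph{deduced} from concavity, since a nonpositive concave function on $\mathbb{Q}^+$ that tends to its supremum $0$ at infinity must be nondecreasing (otherwise concavity would force it to decrease to $-\infty$). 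As written, your proposal establishes neither the limit nor the monotonicity.
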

\begin{remark}
The continuity and the concavity of $g_\beta$ guarantee that it can be extended to a continuous function on $\mathbb{R}^+$.
\end{remark}
\begin{proof}
(i) Because of \eqref{propV1}, for $N_1,N_2\in N_\alpha$, we have
\begin{equation}
\mathbf{P}_{\beta}(\mathcal{V}_{N_1+N_2,\alpha(N_1+N_2)})\geq\mathbf{P}_{\beta}(\mathcal{V}_{N_1,\alpha N_1})\, \mathbf{P}_{\beta}(\mathcal{V}_{N_2,\alpha N_2}).
\end{equation}
Thus, $\{\log\mathbf{P}_{\beta}(\mathcal{V}_{N,\alpha N})\}_{N\in N_\alpha}$ is a subadditive sequence and since $0<\mathbf{P}_{\beta}(\mathcal{V}_{N,\alpha N})\leq1$ for $N\in N_\alpha$, the limit in \eqref{eq:glimit} exists, is finite and satisfies
\begin{equation}\label{eq:supadd}
g_\beta(\alpha)=\sup_{N\in N_\alpha}\frac{1}{N}\log\mathbf{P}_{\beta}(\mathcal{V}_{N,\alpha N})\leq0.
\end{equation}
We recall that $\mathcal{V}_{n,0}=\{(V)_{i=0}^n:\,V_n=0,\,A_n=0\}$, so that
\begin{equation}
\mathbf{P}_{\beta}(\mathcal{V}_{N,0})=\mathbf{P}_{\beta}(V_i=0\text{ for }i=0,\ldots,N)=(1/c_\beta)^N.
\end{equation}
Hence $g_\beta(0)=-\log c_\beta$.\\
\\
(ii) Applying again (\ref{propV1}), we observe that for all $p,q\in\mathbb{N},q>0,0\leq p\leq q$ and $\alpha_1,\alpha_2\in\mathbb{Q}^+,N\in N_{\alpha_1}\cap N_{\alpha_2}$
\begin{equation}
\mathbf{P}_{\beta}(\mathcal{V}_{qN,p\alpha_1 N+(q-p)\alpha_2 N})\geq\mathbf{P}_{\beta}(\mathcal{V}_{N,\alpha_1 N})^p\;\mathbf{P}_{\beta}(\mathcal{V}_{N,\alpha_2 N})^{q-p}.
\end{equation}
Therefore
\begin{multline}
\frac{1}{qN}\log\mathbf{P}_{\beta}\bigg(\mathcal{V}_{qN,\left(\tfrac{p}{q}\alpha_1+\big(1-\tfrac{p}{q}\big)\alpha_2\right)qN}\bigg)
\geq\tfrac{p}{qN}\,\log\mathbf{P}_{\beta}(\mathcal{V}_{N,\alpha_1 N})+\, \tfrac{q-p}{qN}\, \log\mathbf{P}_{\beta}(\mathcal{V}_{N,\alpha_2 N}),
\end{multline}
which proves that
\begin{equation}
g_\beta\Big(\tfrac{p}{q}\,\alpha_1+\big(1-\tfrac{p}{q}\big)\,\alpha_2\Big)\geq\tfrac{p}{q}\, g_\beta(\alpha_1)+\big(1-\tfrac{p}{q}\big)\,g_\beta(\alpha_2),
\end{equation}
which is the desired concavity.

Now we will show that $g_\beta(\alpha)\to 0$ as $\alpha\to\infty$ and since $g_\beta$ is concave on $\mathbb{Q}^+$, it will be sufficient to conclude that $g_\beta$ is nondecreasing.

Assume that $g_\beta(\alpha)$ does not converge to $0$ as $\alpha\to\infty$, then the concavity of $g_\beta$ insures that either $g_\beta$ is nondecreasing on $\mathbb{Q}^+$ and there exists $M>0$ such that $g_\beta(\alpha)\leq-M$ for all $\alpha\in\mathbb{Q}^+$ or $g_\beta$ is decreasing for $\alpha$ large enough and converges to $-\infty$ as $\alpha\to\infty$. In both cases we can claim that there exists $M>0$ and $\alpha_M>0$ such that $g_\beta(\alpha)\leq-M$ for all $\alpha\geq\alpha_M$. Thus, we can use \eqref{eq:supadd} to obtain
\begin{equation}\label{simp}
\mathbf{P}_{\beta}(\mathcal{V}_{N,\alpha N})\leq e^{-NM}\text{ for all }N\in\mathbb{N},\alpha\geq\alpha_M.
\end{equation}
For $\alpha\in[\alpha_M,\infty)\cap2\mathbb{N}$, we consider the set
\begin{equation}
\mathcal{N}_\alpha=\{V:V_1=3\alpha/2+1,\,\alpha+1<V_i<2\alpha+1\text{ for }i=2,\ldots N,\,V_{N+1}=0\}.
\end{equation}
For $N>\alpha$, we observe that
\begin{equation}
\mathcal{N}_\alpha\subseteq\Big\{V:V_{N+1}=0,\,\alpha(N+1)\leq\sum_{i=0}^{N+1}|V_i|\leq(2\alpha+1)(N+1)\Big\},
\end{equation}
and hence, \eqref{simp} allows us to write
\begin{equation}\label{eq:Nalpha}
\mathbf{P}_{\beta}(\mathcal{N}_\alpha)\leq\sum_{k=\alpha(N+1)}^{(2\alpha+1)(N+1)}\mathbf{P}_{\beta}(\mathcal{V}_{N+1,k})\leq(\alpha+1)(N+1)e^{-(N+1)M}.
\end{equation}
Now, we want to exhibit a lower bound on $\mathbf{P}_{\beta}(\mathcal{N}_\alpha)$. By using the Markov property, we have, with $V^*_N:=\max_{1\leq n\leq N}|V_n|$,
\begin{multline}\label{eq:fh}
\mathbf{P}_{\beta}(\mathcal{N}_\alpha)=\mathbf{P}_{\beta}\big(v_1=\tfrac{3\alpha}{2}+1\big)\sum_{k=-\alpha/2-1}^{\alpha/2-1}\mathbf{P}_{\beta}\Bigl(V^*_{N-1}<\tfrac\alpha2;V_{N}=k\Bigr)\\
\cdot\mathbf{P}_{\beta}\big(v_1=-\tfrac{3\alpha}{2}-1-k\big).
\end{multline}
Since $\mathbf{P}_{\beta}(v_1=-\tfrac{3\alpha}{2}-1-k)\geq\mathbf{P}_{\beta}(v_1=-2\alpha-1)$ for $k\in\{-\alpha/2,\ldots,\alpha/2\}$, equation \eqref{eq:fh} implies
\begin{equation}
\mathbf{P}_{\beta}(\mathcal{N}_\alpha)\geq\tfrac{e^{-\frac{\beta}{2}\left(\frac{7\alpha}{2}+2\right)}}{c_\beta^2}\sum_{k=-\alpha/2-1}^{\alpha/2-1}\mathbf{P}_{\beta}\Bigl(V^*_{N-1}<\alpha/2;V_{N}=k\Bigr).
\end{equation}
We choose $\alpha>4$ to get
\begin{equation}\label{eq:Kolmogorov}
\mathbf{P}_{\beta}(\mathcal{N}_\alpha)\geq\tfrac{e^{-2\beta\alpha}}{c_\beta^2}\mathbf{P}_{\beta}\Bigl(V^*_N<\alpha/2\Bigr),
\end{equation}
and we can apply the Kolmogorov's inequality (see \cite[p.~61]{RD05}), which gives
\begin{equation}\label{kolmo}
\mathbf{P}_{\beta}\Bigl(V^*_N<\tfrac\alpha2\Bigr)\geq1-\tfrac{4}{\alpha^2}\mathbf{Var}_{\beta}(V_N).
\end{equation}
Therefore, \eqref{eq:Nalpha}, \eqref{eq:Kolmogorov} and \eqref{kolmo} allow us to write
\begin{equation}\label{cbf}
\tfrac{e^{-2\alpha\beta}}{c_\beta^2}\left(1-\tfrac{4}{\alpha^2}\mathbf{Var}_\beta(V_N)\right)\leq(\alpha+1)(N+1)e^{-(N+1)M}.
\end{equation}
Since the above inequality is true for all $\alpha>\alpha_M$ and $N>\alpha$, we can choose $\alpha=2\sqrt{\lambda N\mathbf{Var}_\beta(v_1)}$ with $\lambda>1$ such that for $N$ large enough \eqref{cbf} becomes
\begin{equation}\label{imp}
\tfrac{1}{c_\beta^2}\left(1-\tfrac{1}{\lambda}\right)e^{-4\beta\sqrt{\lambda N\mathbf{Var}_\beta(v_1)}}\leq(2{\textstyle\sqrt{\lambda N\mathbf{Var}_\beta(v_1)}}+1)(N+1)e^{-(N+1)M}.
\end{equation}
For $N$ large, \eqref{imp} is clearly impossible and therefore $g_\beta(\alpha)$ converges to $0$ as $\alpha\to\infty$ and $g_\beta$ is nondecreasing.
\end{proof}

It remains to show that the two definitions of $g_\beta$ in \eqref{eq:glimit1} and \eqref{eq:glimit} are equivalent. To this end, we first remark that by subadditivity, the limit in \eqref{eq:glimit1} exists for all $\alpha\in [0,\infty)$. We recall \eqref{defA} and we note that, for $\alpha\in\mathbb{Q}^+$ and $N\in N_\alpha$, we have $\mathcal{V}_{N,\alpha N}\subset\{A_N\leq\alpha N,\,V_N=0\}$. Therefore
\begin{equation}
\lim_{\substack{N\in N_\alpha\\N\to\infty}}\frac{1}{N}\log\mathbf{P}_{\beta}(\mathcal{V}_{N,\alpha N})\leq \lim_{N\to\infty}\frac{1}{N}\log\mathbf{P}_{\beta}\big(A_N\leq\alpha N,V_N=0\big).
\end{equation}
We note also that $\{A_N\leq\alpha N,\,V_N=0\}=\cup_{i=0}^{\alpha N}\,\mathcal{V}_{N,i}$ and we use \eqref{eq:supadd} and the fact that $g_\beta$ is nondecreasing to state
\begin{equation}\label{inter}
\tfrac{1}{N}\log\mathbf{P}_{\beta}\left(A_N=i,V_N=0\right)\leq g_\beta\left(\tfrac{i}{N}\right)\leq g_\beta\left(\alpha\right),\quad \text{for}\;i\leq\alpha N,
\end{equation}
where $g_\beta$ in \eqref{inter} must be taken in the sense of its definition in \eqref{eq:glimit}. Thus,
\begin{equation}\label{wgt}
\mathbf{P}_{\beta}\Big(A_N\leq\alpha N,V_N=0\Big)\leq(\alpha N+1)e^{N g_\beta(\alpha)}
\end{equation}
and it suffices to take $\frac{1}{N}\log$ of both sides in \eqref{wgt} and to let $N\to\infty$ to conclude that the two definitions are indeed equivalent.

\subsection{Asymptotics of \texorpdfstring{$g_\beta$}{g}}\label{assym}
\begin{proposition}\label{gbounded}
For all $\beta>0$, there exists $c_1>0$ (depending on $\beta$) such that 
\begin{equation}\label{lb}
g_\beta(\alpha)\geq-\frac{c_1}{\alpha^2},\quad\text{for $\alpha$ large enough}.
\end{equation}
For all compact $K\subset(0,+\infty)$, there exist $c_2,\alpha_2>0$ (depending on $K$) such that  
\begin{equation}\label{ub}
g_\beta(\alpha)\leq-\frac{c_2}{\alpha^2},\quad\text{for}\;\beta\in K,\,\alpha\geq\alpha_2. 
\end{equation}
\end{proposition}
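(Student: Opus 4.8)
The plan is to establish the two bounds separately, both expressing the same heuristic: an area of order $\alpha N$ forces the walk to live essentially inside a strip of width of order $\alpha$, and such a confinement carries the diffusive cost $e^{-\Theta(N/\alpha^2)}$. I will use the equivalent definition \eqref{eq:glimit1}, $g_\beta(\alpha)=\lim_N\frac1N\log\mathbf P_\beta(A_N\le\alpha N,\,V_N=0)$, the finiteness of $\sigma_\beta^2:=\mathbf{Var}_\beta(v_1)$, and the strict positivity of $p(0)$ and $p(\pm1)$ coming from \eqref{lawP}.

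For the lower bound \eqref{lb} I would first discard the area constraint in favour of a confinement constraint. Setting $m:=\lfloor\alpha/2\rfloor$ and $B:=\{-m,\dots,m\}$, any trajectory with $|V_i|\le\alpha/2$ for $i<N$ and $V_N=0$ satisfies $A_N\le(N-1)\alpha/2<\alpha N$, so
\[
g_\beta(\alpha)\ \ge\ \liminf_{N\to\infty}\tfrac1N\log\mathbf P_\beta\big(V_i\in B,\ 1\le i\le N;\ V_N=0\big)=\liminf_{N\to\infty}\tfrac1N\log P_B^N(0,0),
\]
where $P_B$ is the transition operator $P$ restricted to $B$. I would then lower bound $P_B^N(0,0)$ by the principal eigenvalue $\lambda_1(\alpha)$ of $P_B$. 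Using the slowly varying test function $u(x)=\cos\!\big(\pi x/(2m+2)\big)$ in the variational formula and bounding the Dirichlet form $\mathcal E(u)=\tfrac12\sum_{x,y}p(y-x)(u(x)-u(y))^2\le C\sigma_\beta^2\alpha^{-2}\|u\|^2$ (finite second moment) gives $\lambda_1(\alpha)\ge 1-C\sigma_\beta^2\alpha^{-2}$. Since $p(0)>0$ the chain is aperiodic, so Perron–Frobenius yields a positive eigenfunction $\phi_1$ with $\phi_1(0)\ge c/\sqrt\alpha$; taking $N$ even and using $P_B^N(0,0)\ge\lambda_1(\alpha)^N\phi_1(0)^2$ produces the confinement probability $\ge(c/\alpha)\,e^{-C\sigma_\beta^2 N/\alpha^2}$, hence $g_\beta(\alpha)\ge-c_1/\alpha^2$.

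For the upper bound \eqref{ub} I would apply an exponential Chebyshev inequality in the area: since $\mathbf 1_{\{A_N\le\alpha N\}}\le e^{\theta\alpha N}e^{-\theta A_N}$ for every $\theta>0$,
\[
\mathbf P_\beta(A_N\le\alpha N,\ V_N=0)\ \le\ e^{\theta\alpha N}\,\mathbf E_\beta\big[e^{-\theta A_N}\,\mathbf 1_{\{V_N=0\}}\big].
\]
The expectation is a transfer-operator quantity: with $K_\theta(x,y)=p(y-x)e^{-\theta|y|}$ and its symmetrisation $\tilde K_\theta(x,y)=e^{-\theta|x|/2}p(y-x)e^{-\theta|y|/2}$, one has $\mathbf E_\beta[e^{-\theta A_N}\mathbf 1_{\{V_N=0\}}]=K_\theta^N(0,0)=\tilde K_\theta^N(0,0)\le\mu_\beta(\theta)^N$, where $\mu_\beta(\theta)$ is the top of the spectrum of the self-adjoint operator $\tilde K_\theta$ on $\ell^2(\mathbb Z)$. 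Writing $u(x)=e^{-\theta|x|/2}h(x)$ gives the variational expression
\[
\mu_\beta(\theta)=\sup_{u\ne0}\frac{\langle u,Pu\rangle}{\sum_x e^{\theta|x|}u(x)^2}=\sup_{u\ne0}\frac{\sum_x u(x)^2-\mathcal E(u)}{\sum_x e^{\theta|x|}u(x)^2}.
\]
I would then prove the spectral-gap bound $\mu_\beta(\theta)\le 1-c\,\theta^{2/3}$ for small $\theta$: using $e^{\theta|x|}-1\ge\theta|x|$ this reduces to the discrete uncertainty inequality $\mathcal E(u)+\theta\sum_x|x|u(x)^2\ge c\,\theta^{2/3}\sum_x u(x)^2$, which follows, after bounding $\mathcal E$ below by the nearest-neighbour form via $p(\pm1)>0$, from the $\theta^{2/3}$-scaling of the bottom eigenvalue of the continuum Airy operator $-\frac{d^2}{dx^2}+|x|$. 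Optimising $\theta\mapsto\theta\alpha+\log\mu_\beta(\theta)\le\theta\alpha-c\theta^{2/3}$ at $\theta\asymp\alpha^{-3}$ then gives $g_\beta(\alpha)\le-c_2/\alpha^2$.

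The main obstacle is precisely the spectral-gap estimate $1-\mu_\beta(\theta)\ge c\,\theta^{2/3}$, with the correct Airy exponent and, crucially, with a constant uniform over the compact $K$. This requires comparing the discrete Dirichlet form $\mathcal E$ to its continuum counterpart uniformly in $\beta$ (controlled through $\inf_{\beta\in K}p(\pm1)>0$ and $\sup_{\beta\in K}\sigma_\beta^2<\infty$) and transferring the scaling of the Airy operator to the lattice; getting the exponent $2/3$ rather than a weaker power is exactly what pins down the power $\alpha^{-2}$. By contrast, the endpoint constraint $V_N=0$, the parity restriction to even $N$, and the passage between the two definitions of $g_\beta$ contribute only polynomial factors and do not affect the exponential rate.
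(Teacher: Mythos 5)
Your proposal is correct, but it follows a genuinely different route from the paper's. For the lower bound \eqref{lb}, the paper (Lemma \ref{rwlb}) confines the walk by cutting $\{1,\dots,N\}$ into blocks of length $a^2$ and invokes Donsker's invariance principle to show that on each block the walk stays in a strip of width $a$ and lands back in a smaller window with probability bounded below; you instead bound the principal Dirichlet eigenvalue of the walk killed outside the strip via a Rayleigh-quotient computation with a cosine test function. Both yield $e^{-CN/\alpha^2}$; yours avoids the invariance principle, and note that at the step $P_B^N(0,0)\ge\lambda_1^N\phi_1(0)^2$ any $N$-independent positive lower bound on $\phi_1(0)$ (which Perron--Frobenius already gives for the finite irreducible matrix $P_B$) suffices, since it only contributes a vanishing $\tfrac2N\log\phi_1(0)$ to the rate --- so the claimed $c/\sqrt\alpha$ is more than you need. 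For the upper bound \eqref{ub}, the paper (Lemma \ref{rwub}) coarse-grains: a trajectory of area at most $\alpha N$ must, on a quarter of the blocks of length $Ma^2$, spend a third of its time within $2\alpha$ of the origin, and Berry--Esseen makes each such block event have probability smaller than $1/4e$ uniformly on $K$; you instead tilt exponentially in the area and bound the top of the spectrum of the symmetrized transfer operator, reducing everything to the discrete uncertainty inequality $\mathcal{E}(u)+\theta\sum_x|x|u(x)^2\ge c\,\theta^{2/3}\sum_x u(x)^2$. That inequality is the one genuinely nontrivial step you leave as a claim, but it is provable by an elementary Nash-type argument (retain only the nearest-neighbour part of $\mathcal{E}$ using $\inf_{\beta\in K}\mathbf{P}_\beta(v_1=\pm1)>0$, compare $u(x)^2$ for $|x|\le L$ with its average over $L\le|y|\le 2L$, and choose $L=\theta^{-1/3}$), with constants uniform on $K$; no actual passage to the continuum Airy operator is required. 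Your route buys a sharper structural statement (a quantitative spectral gap for the area-tilted operator, which in principle pins down the constant in $-c_2/\alpha^2$), whereas the paper's coarse-graining is softer, needing only the central limit theorem with a rate.
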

\begin{proof}
We will first prove Proposition \ref{gbounded} subject to Lemmas \ref{rwlb} and \ref{rwub} below. The proofs of these two Lemmas will be postponed to Sections \ref{pl1} and \ref{pl2}. We recall \eqref{defA} and the notation $V^*_N=\max_{1\leq n\leq N}|V_n|$.
\begin{lemma}\label{rwlb}
For $\beta>0$, there exists $c_1>0$ (depending on $\beta$) such that for $\alpha$ large enough 
\begin{equation}
\mathbf{P}_{\beta}\bigl(V_N^*\leq\alpha\bigr)\geq e^{-\frac{c_1N}{\alpha^2}},\quad\text{for $N$ large enough}.
\end{equation}
\end{lemma}
\begin{lemma}\label{rwub}
Let $K$ be a compact subset of $(0,+\infty)$. There exist $c_2,\alpha_2>0$ (depending on $K$) such that for $\beta\in K,\;\alpha\geq\alpha_2$ 
\begin{equation}
\mathbf{P}_{\beta}\bigl(A_N\leq\alpha N\bigr)\leq e^{-\frac{c_2N}{\alpha^2}},\quad\text{for $N$ large enough}.
\end{equation}
\end{lemma}

Recall \eqref{defA}, \eqref{defVV} and \eqref{eq:glimit} and note that the set $\mathcal{V}_{N,\alpha N}$ is included in $\{(V_n)_{n=0}^N\colon A_N\leq\alpha N\}$ when $\alpha\in\mathbb{Q}^+$ and $N\in N_\alpha$. Therefore, the upper bound in \eqref{ub} is a direct consequence of Lemma \ref{rwub} and of the continuity of $\alpha\to g_\beta(\alpha)$. 

For the lower bound in \eqref{lb}, by Markov property, we obtain
\begin{align}\label{yu}
\nonumber\textstyle\mathbf{P}_{\beta}(V^*_N\leq\alpha,V_N=0)&=\sum_{x\in[-\alpha,\alpha]}\mathbf{P}_{\beta}(V^*_{N-1}\leq\alpha,V_{N-1}=x)\,\mathbf{P}_{\beta}(v_1=-x)\\
&\geq\mathbf{P}_{\beta}(V^*_{N-1}\leq\alpha)\,\mathbf{P}_{\beta}(v_1=\alpha).
\end{align}
Since $V_N^*\leq\alpha$ implies $A_N=\sum_{n=1}^N|V_n|\leq\alpha N$, we can use \eqref{yu} to write
\begin{equation}\label{rt}
\textstyle\mathbf{P}_{\beta}\bigl(A_N\leq\alpha N,V_N=0\bigr)\geq\mathbf{P}_{\beta}(V^*_{N-1}\leq\alpha)\,\mathbf{P}_{\beta}(v_1=\alpha).
\end{equation}
Recall the definition of $g_\beta$ in \eqref{eq:glimit1} and apply Lemma \ref{rwlb}, we obtain $g_\beta(\alpha)\geq-\frac{c_1}{\alpha^2}$ for $\alpha$ large enough, and the proof of Proposition \ref{gbounded} is complete.
\end{proof}

\section{Proof of the main results}

In Section \ref{pth1}, we prove the variational formula stated in Theorem \ref{Thm1}. In Section \ref{sec42}, we deduce from the variational formula that the collapse transition exists and we compute the critical point (Theorem \ref{Thm2}). In section \ref{sec43}, we prove that the collapse transition is of order $3/2$ (Theorem \ref{Thm3}), although this proof is subject to Lemmas \ref{rwlb} and \ref{rwub}, that provide the asymptotic of $g_\beta(\alpha)$ as $\alpha\to\infty$. These two lemmas are established in Sections \ref{pl1} and \ref{pl2}, respectively.

\subsection{Proof of Theorem \ref{Thm1}}\label{pth1}
\begin{proof}
Recall Proposition \ref{tt}, let $N'=N+1$ and consider the partition function of size $L-1$
\begin{align}\label{dert}
\nonumber Z^\textsf{m}_{L-1,\beta}&=c_\beta\Phi^\mathsf{m}_{L-1,\beta}\sum_{N'=2}^{L}\left(\Gamma^\mathsf{m}(\beta)\right)^{N'-1}\mathbf{P}_{\beta}(\mathcal{V}_{N',L-N'})\\
&=\frac{c_\beta}{\Gamma^\mathsf{m}(\beta)}\Phi^\mathsf{m}_{L-1,\beta}\sum_{N'=2}^{L}\left(\Gamma^\mathsf{m}(\beta)\right)^{N'}\mathbf{P}_{\beta}(\mathcal{V}_{N',L-N'}).
\end{align}
(a) The lower bound:\\
\\
Pick $\alpha\in(0,1]\cap\mathbb{Q}$, $L\in N_\alpha$ and restrict the summation in \eqref{dert} to $N'=\alpha L$
\begin{equation}\label{uu}
Z^\textsf{m}_{L-1,\beta}\geq\frac{c_\beta}{\Gamma^\mathsf{m}(\beta)}\Phi^\mathsf{m}_{L-1,\beta}\left(\Gamma^\mathsf{m}(\beta)\right)^{\alpha L}\mathbf{P}_{\beta}(\mathcal{V}_{\alpha L,L-\alpha L}).
\end{equation}
Take $\frac{1}{L}\log$ of both sides in \eqref{uu} and let $L\to\infty$ to get
\begin{equation}
\tilde{f}^\mathsf{m}(\beta)\geq\alpha\log\left(\Gamma^\mathsf{m}(\beta)\right)+\alpha\,g_\beta\left(\tfrac{1-\alpha}{\alpha}\right).
\end{equation}
By continuity of $\alpha\to g_\beta(\alpha)$ on $[0,\infty)$, we can conclude that
\begin{equation}
\tilde{f}^\mathsf{m}(\beta)\geq\sup_{\alpha\in(0,1]}\left[\alpha\log\left(\Gamma^\mathsf{m}(\beta)\right)+\alpha\,g_\beta\left(\tfrac{1-\alpha}{\alpha}\right)\right].
\end{equation}
(b) The upper bound:\\
\\
For $L\in\mathbb{N}$, let
\begin{equation}
M(L)=\sup_{\alpha\in\left\{2/L,\ldots,1\right\}}\big[\left(\Gamma^\mathsf{m}(\beta)\right)^{\alpha L}\mathbf{P}_{\beta}(\mathcal{V}_{\alpha L,L-\alpha L})\big],
\end{equation}
and use \eqref{dert} to observe that
\begin{equation}\label{eq:ineq}
\tfrac{1}{L}\log{Z^\mathsf{m}_{L-1,\beta}}-\tfrac{1}{L}\log{\Phi^\mathsf{m}_{L-1,\beta}}\leq\tfrac{1}{L}\log{\big(\tfrac{c_\beta}{\Gamma^\mathsf{m}(\beta)}\big)}+\tfrac{1}{L}\log L+\tfrac{1}{L}\log{M(L)}.
\end{equation}
With the help of \eqref{eq:supadd}, we can claim that 
\begin{equation*}\label{rdr} \tfrac{1}{L}\log{M(L)}\leq\sup_{\alpha\in(0,1]}\left[\alpha\log\left(\Gamma^\mathsf{m}(\beta)\right)+\alpha\,g_\beta\left(\tfrac{1-\alpha}{\alpha}\right)\right],
\end{equation*}
which, together with \eqref{eq:ineq}, is sufficient to obtain the upper bound.
\end{proof}

\subsection{Proof of Theorem \ref{Thm2}}\label{sec42}
\begin{proof}
To begin with, we will show that $\tilde{f}^\mathsf{m}(\beta)>0$ if and only if $\Gamma^\mathsf{m}(\beta)>1$. From Theorem \ref{Thm1} and since $g_\beta$ is negative (see Lemma \ref{funcg}), it follows that if $\Gamma^\mathsf{m}(\beta)\leq1$
\begin{equation}
\tilde{f}^\mathsf{m}(\beta)=\sup_{\alpha\in[0,1]}\left[\alpha\log\left(\Gamma^\mathsf{m}(\beta)\right)+\alpha\,g_\beta\left(\tfrac{1-\alpha}{\alpha}\right)\right]\leq 0.
\end{equation}
Recall that, by Lemma \ref{le1}, $\tilde{f}^\mathsf{m}(\beta)\geq 0$ for all $\beta>0$. Thus $\tilde{f}^\mathsf{m}(\beta)=0$ when $\Gamma^\mathsf{m}(\beta)\leq 1$.\\
\\
When $\Gamma^\mathsf{nu}(\beta)>1$ in turn, Lemma \ref{funcg} gives that $g_\beta\left(\frac{1-\alpha}{\alpha}\right)$ is nondecreasing and tends to $0$ when $\alpha\to 0$. Consequently, 
\begin{equation}
\tilde{f}^\mathsf{m}(\beta)=\sup_{\alpha\in[0,1]}\left[\alpha\log\left(\Gamma^\mathsf{m}(\beta)\right)+\alpha\,g_\beta\left(\tfrac{1-\alpha}{\alpha}\right)\right]>0.
\end{equation}
By recalling the definition of $\Gamma^\mathsf{m}(\beta)$ and $c_\beta$ in \eqref{sqq} and \eqref{lawP}, we note that $\beta\mapsto\Gamma^\mathsf{m}(\beta)$ is decreasing on $[0,\infty)$ and therefore, the collapse transition occurs at $\beta^\mathsf{m}_c$, the unique positive solution of the equation $\Gamma^\mathsf{m}(\beta)=1$.
\end{proof}

\subsection{Proof of Theorem \ref{Thm3}}\label{sec43}
\begin{proof}
In this proof, we will focus on the non-uniform case. Again, adapting the proof to the uniform case is straightforward. For $\beta<\beta_c^\mathsf{m}$, let $\epsilon=\beta_c^\mathsf{nu}-\beta$. A first order Taylor expansion of $\Gamma^\mathsf{nu}(\beta)$ near $\beta_c^\mathsf{nu}$ gives
\begin{equation}
\Gamma^\mathsf{nu}(\beta)=1+c_{\mathsf{nu}}\epsilon+o(\epsilon)\ \ \text{as}\ \ \epsilon\downarrow 0\ \ \text{and where}\ \  c_{\mathsf{nu}}=\tfrac{2}{3}\Big[1+\tfrac{e^{-\beta_c^{\mathsf{nu}}/2}}{1-e^{-\beta_c^\mathsf{nu}}}\Big].
\end{equation}
Thus, we can choose $\epsilon_0\in(0,\beta_c^\mathsf{nu})$ such that for all $\beta\in I_0:=(\beta_c^\mathsf{nu}-\epsilon_0,\beta_c^\mathsf{nu})$, we have $\log{(\Gamma^\mathsf{nu}(\beta))}\leq 2c_{\mathsf{nu}}\epsilon$. By Proposition \ref{gbounded}, there exist two constants $c_2:=c_2(\beta_c^\mathsf{nu},\epsilon_0)>0$ and $\alpha_2:=\alpha_2(\beta_c^\mathsf{nu},\epsilon_0)\in(0,1)$, such that for all $\beta\in I_0$
\begin{equation}\label{eq:bb}
g_\beta\left(\tfrac{1-\alpha}{\alpha}\right)\leq-\tfrac{c_2\alpha^2}{(1-\alpha)^2}\leq-c_2\alpha^2,\quad\alpha\in[0,\alpha_2].
\end{equation}
For $\beta\in I_0$, we can write $\tilde{f}^\mathsf{nu}(\beta)=\max\{A^\mathsf{nu}_{\alpha_2,\beta},\,B^\mathsf{nu}_{\alpha_2,\beta}\}$ with
\begin{align}
\nonumber
A^\mathsf{nu}_{\alpha_2,\beta}&=\sup_{\alpha\in[0,\alpha_2)}\left[\alpha\log\left(\Gamma^\mathsf{nu}(\beta)\right)+\alpha\,g_\beta\left(\tfrac{1-\alpha}{\alpha}\right)\right]\\
B^\mathsf{nu}_{\alpha_2,\beta}&=\sup_{\alpha\in[\alpha_2,1]}\left[\alpha\log\left(\Gamma^\mathsf{nu}(\beta)\right)+\alpha\,g_\beta\left(\tfrac{1-\alpha}{\alpha}\right)\right].
\end{align}
By \eqref{eq:bb}, we can claim that for $\alpha\in[0,\alpha_2)$, we have
\begin{equation}
\alpha\log\left(\Gamma^\mathsf{nu}(\beta)\right)+\alpha\,g_\beta\left(\tfrac{1-\alpha}{\alpha}\right)\leq 2c_\mathsf{nu}\alpha\epsilon-c_2\alpha^3,
\end{equation}
and we recall that (by Proposition \ref{gbounded}) $\log\left(\Gamma^\mathsf{nu}(\beta)\right)+g_\beta\left(\tfrac{1-\alpha}{\alpha}\right)>0$ when $\alpha$ is chosen small enough.
Therefore
\begin{equation}
0<A^\mathsf{nu}_{\alpha_2,\beta}\leq\sup_{\alpha\in[0,\alpha_2)}\left[2c_\mathsf{nu}\alpha\epsilon-c_2\alpha^3\right].
\end{equation}
Since $g_\beta$ is increasing, it suffices to apply \eqref{eq:bb} at $\alpha_2$ to obtain 
\begin{equation}
g_\beta\left(\tfrac{1-\alpha}{\alpha}\right)\leq g_\beta\left(\tfrac{1-\alpha_2}{\alpha_2}\right)\leq-\tfrac{c_2\alpha_2^2}{(1-\alpha_2)^2},\quad\text{for}\ \alpha\in[\alpha_2,1]
\end{equation}
and therefore
\begin{equation}\label{ze}
B^\mathsf{m}_{\alpha_2,\beta}\leq\sup_{\alpha\in[\alpha_2,1]}\Big[\alpha\Bigl(2c_\mathsf{nu}\epsilon-\tfrac{c_2\alpha_2^2}{(1-\alpha_2)^2}\Bigr)\Big].
\end{equation}
At this stage, we can choose $\epsilon_1<\epsilon_0$ such that for all $\epsilon\in(0,\epsilon_1)$, the right hand side in \eqref{ze} is negative. Hence, for $\epsilon\in(0,\epsilon_1)$, we have
\begin{equation}\label{rhh}
\tilde{f}^\mathsf{nu}(\beta_c^\mathsf{nu}-\epsilon)=A^\mathsf{m}_{\alpha_2,\beta_c^\mathsf{nu}-\epsilon}\leq\sup_{\alpha\in[0,\alpha_2)}\left[2c_\mathsf{nu}\alpha\epsilon-c_2\alpha^3\right].
\end{equation}
In order to obtain the lower bound, we let $\epsilon\in(0,\beta_c^\mathsf{nu})$ and we can rewrite the partition function $Z_{L,\beta_c^\mathsf{nu}-\epsilon}^\mathsf{nu}$ as
\begin{equation}
Z_{L,\beta_c^\mathsf{nu}-\epsilon}^\mathsf{nu}=\left(\tfrac{e^{\beta_c^\mathsf{nu}-\epsilon}}{2}\right)^L\sum_{N=1}^{L}c_{\beta_c^\mathsf{nu}}\left(\tfrac{2c_{\beta_c^\mathsf{nu}}}{3e^{\beta_c^\mathsf{nu}-\epsilon}}\right)^N\sum_{\substack{l\in\mathcal{L}_{N,L}\\l_0=l_{N+1}=0}}\prod_{i=0}^{N}\frac{e^{-\frac{\beta_c^\mathsf{nu}-\epsilon}{2}|l_i+l_{i+1}|}}{c_{\beta_c^\mathsf{nu}}}.
\end{equation}
Since $\epsilon>0$ and $\frac{2c_{\beta_c^\mathsf{nu}}}{3e^{\beta_c^\mathsf{nu}}}=1$, we have
\begin{align}
\nonumber Z_{L,\beta_c^\mathsf{nu}-\epsilon}^\mathsf{nu}&\geq c_{\beta_c^\mathsf{nu}}\left(\tfrac{e^{\beta_c^\mathsf{nu}-\epsilon}}{2}\right)^L\sum_{N=1}^{L}e^{\epsilon N}\sum_{\substack{l\in\mathcal{L}_{N,L}\\l_0=l_{N+1}=0}}\prod_{i=0}^{N}\tfrac{e^{-\frac{\beta_c^\mathsf{nu}}{2}|l_i+l_{i+1}|}}{c_{\beta_c^\mathsf{nu}}}\\
&=c_{\beta_c^\mathsf{nu}}\left(\tfrac{e^{\beta_c^\mathsf{nu}-\epsilon}}{2}\right)^L\sum_{N=1}^{L}e^{\epsilon N}\mathbf{P}_{\beta_c^\mathsf{nu}}(\mathcal{V}_{N+1,L-N}).
\end{align}
Proceeding as in (4.1)-(4.6), we get, for all $\epsilon\in(0,\beta_c^\mathsf{nu})$,
\begin{equation}
\tilde{f}^\mathsf{nu}(\beta_c^\mathsf{nu}-\epsilon)\geq\sup_{\alpha\in[0,1]}\left[\alpha\epsilon+\alpha\,g_{\beta_c^\mathsf{nu}}\left(\tfrac{1-\alpha}{\alpha}\right)\right].
\end{equation}
By applying again Proposition \ref{gbounded}, we conclude that there exist two constants $c_1:=c_1(\beta_c^\mathsf{nu})>0$ and $\alpha_1:=\alpha_1(\beta_c^\mathsf{nu})\in(0,1)$ such that for all $\alpha\in[0,\alpha_1]$
\begin{equation}
g_{\beta_c^\mathsf{nu}}\left(\tfrac{1-\alpha}{\alpha}\right)\geq -\tfrac{c_1\alpha^2}{(1-\alpha)^2}\geq -\tfrac{c_1\alpha^2}{(1-\alpha_1)^2}.
\end{equation}
Therefore,
\begin{equation}
\sup_{\alpha\in[0,1]}\left[\alpha\epsilon+\alpha\,g_{\beta_c^\mathsf{nu}}\left(\tfrac{1-\alpha}{\alpha}\right)\right]\geq\sup_{\alpha\in[0,\alpha_1)}\left[\alpha\epsilon-\tfrac{c_1\alpha^3}{(1-\alpha_1)^2}\right],
\end{equation}
and for $\epsilon\in(0,\beta_c^\mathsf{nu})$, we have
\begin{equation}\label{rfg}
\tilde{f}^\mathsf{nu}(\beta_c^\mathsf{nu}-\epsilon)\geq\sup_{\alpha\in[0,\alpha_1)}\left[\alpha\epsilon-c'_1\alpha^3\right],
\end{equation}
where $c'_1=\tfrac{c_1}{(1-\alpha_1)^2}$. Since any function of type $\alpha\mapsto c_3\epsilon\alpha-c_4\alpha^3$ (with $c_3,c_4>0$) reaches its maximum at $\alpha=\sqrt{\frac{c_3\epsilon}{3c_4}}$, we can combine \eqref{rhh} and \eqref{rfg} and conclude that there exist $\epsilon_2>0$ and $c_5,c_6>0$ such that
\begin{equation}
c_5\epsilon^{3/2}\leq\tilde{f}^\mathsf{nu}(\beta_c^\mathsf{nu}-\epsilon)\leq c_6\epsilon^{3/2}\text{ for }\epsilon\in(0,\epsilon_2).
\end{equation}
The last estimate yields that $\tilde{f}^\mathsf{nu}(.)$ is $C^1$ but is not $C^2$ at the critical point $\beta_c^\mathsf{nu}$. This means that the non-uniform system undergoes a second order phase transition.
\end{proof}

\subsection{Proof of Lemma \ref{rwlb}}\label{pl1}
We recall the notation $V^*_N:=\max_{1\leq n\leq N}|V_n|$ and let $\mathbf{P}_{\beta,x}$ be the law of the random walk $V$ starting from $x\in\mathbb{Z}$. We also let $a=\lfloor\alpha\rfloor$ where $\lfloor\alpha\rfloor$ denotes the integer part of a real number $\alpha$. Since $V$ takes integer values only, we have
\begin{equation}
\mathbf{P}_{\beta}(V^*_N\leq\alpha)=\mathbf{P}_{\beta}(V^*_N\leq a).
\end{equation}
For $\alpha$ large, pick an integer $N$ such that $N/{a^2}\in\mathbb{N}$ and let $k=N/{a^2}$. With the help of the Markov property of $V$, we desintegrate $\mathbf{P}_{\beta}(V^*_N\leq a)$ with respect to the position occupied by the random walk $V$ at times $a^2,2a^2,\dots,(k-1)a^2$,
\begin{align}\label{tfs}
\nonumber\mathbf{P}_{\beta}(V^*_N\leq a)&=\sum_{\substack{x_0=0,x_i\in[-a,a]\\i=1,\ldots,k}}\prod_{i=0}^{k-1}\mathbf{P}_{\beta,x_i}\Bigl(V^*_{a^2}\leq a;V_{a^2}=x_{i+1}\Bigr)\\
&\geq\sum_{\substack{x_0=0,x_i\in[-a/4,a/4]\\i=1,\ldots,k}}\prod_{i=0}^{k-1}\mathbf{P}_{\beta,x_i}\Bigl(V^*_{a^2}\leq a;V_{a^2}=x_{i+1}\Bigr).
\end{align}
For any integer $x\in[0,a/4]$, we consider the two sets of paths
\begin{equation}
\Pi_1^x=\{(V_i)_{i=0}^{a^2}:V_0=x;V^*_{a^2}\leq a;V_{a^2}\in[-a/4,a/4]\},
\end{equation}
and
\begin{equation}
\Pi_2=\{(V_i)_{i=0}^{a^2}:V_0=0;V^*_{a^2}\leq3a/4;V_{a^2}\in[-a/4,0]\}.
\end{equation}
Clearly, if $V=(V_i)_{i=0}^{a^2}\in\Pi_2$, then the trajectory $V+x$ starts at $x$ and is an element of $\Pi_1^x$. Similarly, for $x\in[-a/4,0]$, $\Pi'_2+x\subseteq\Pi_1^x$ where
\begin{equation}
\Pi'_2=\{(V_i)_{i=0}^{a^2}:V_0=0;V^*_{a^2}\leq3a/4;V_{a^2}\in[0,a/4]\}.
\end{equation}
Since $V$ is symetric, the quantities $\mathbf{P}_{\beta,0}(\Pi_2)$ and $\mathbf{P}_{\beta,0}(\Pi'_2)$ are equal and therefore, for $x\in[-a/4,a/4]$,
\begin{equation}
\mathbf{P}_{\beta,x}\bigl(V^*_{a^2}\leq a;V_{a^2}\in[-a/4,a/4]\bigr)\geq\mathbf{P}_{\beta}\bigl(V^*_{a^2}\leq3a/4;V_{a^2}\in[-a/4,0]\bigr).
\end{equation}
Recall \eqref{tfs}, we conclude that
\begin{equation}\label{rh}
\mathbf{P}_{\beta}(V^*_{N}\leq a)\geq\bigl[\mathbf{P}_{\beta}\bigl(V^*_{a^2}\leq 3a/4;V_{a^2}\in[-a/4,0]\bigr)\bigr]^{k}.
\end{equation}
It remains to bound from below the right hand side of \eqref{rh}. Let $\theta_{a^2}(t)$, $t\in[0,1]$ be the continuously interpolated process associated with the RW trajectory $(V_i)_{i=0}^{a^2}$, i.e.,
\begin{equation}
\theta_{a^2}(t)=V_{\lfloor a^2t\rfloor}+\{a^2t\}v_{\lfloor a^2t\rfloor+1},\quad t\in[0,1].
\end{equation}
Let $\sigma_\beta^2=\mathbf{E}_\beta(v_1^2)$. By Donsker's theorem, $\theta_{a^2}(.)/(\sigma_\beta a)\Rightarrow B(.)$ as $a\to\infty$ on $C[0,1]$, where $B$ is a standard Brownian motion (see \cite[p.~399]{RD05}). Therefore
\begin{multline}
\lim_{a\to\infty}\mathbf{P}_{\beta}\bigl(V^*_{a^2}\leq3a/4;V_{a^2}\in[-a/4,0]\bigr)\\
=\mathbf{P}\Bigl(\max_{0\leq t\leq 1}|B(t)|\leq\tfrac{3}{4\sigma_\beta};B(1)\in\bigl[-\tfrac{1}{4\sigma_\beta},0\bigr]\Bigr)\in(0,1).
\end{multline}
Consequently, there exist $u_1\in(0,1)$ and $\alpha_0>0$ such that for all $\alpha>\alpha_0$,
\begin{equation}\label{ed}
\mathbf{P}_{\beta}(V^*_{a^2}\leq 3a/4;V_{a^2}\in[-a/4,0])\geq u_1,
\end{equation}
and \eqref{rh} becomes $\mathbf{P}_{\beta}(V^*_{N}\leq a)\geq u_1^{N/{a^2}}.$ To overcome the limitation $N/a^2\in\mathbb{N}$, we write
\begin{equation}
\mathbf{P}_{\beta}(V^*_{N}\leq a)\geq\mathbf{P}_{\beta}\big(V^*_{a^2\lfloor N/a^2\rfloor}\leq a\big)\geq u_1^{\lfloor N/a^2\rfloor}.
\end{equation}
It remains to choose $c_1>0$ satisfying $u_1>e^{-c_1}$, so that for $N$ large enough $u_1^{\lfloor N/a^2\rfloor}\geq e^{-\frac{c_1N}{\alpha^2}}$, which completes the proof.

\subsection{Proof of Lemma \ref{rwub}}\label{pl2}
The proof is a coarse graining argument divided into $3$ steps. In step 1, we pick $M\in\mathbb{N}$, we set $a=\lfloor\alpha\rfloor$ and we partition $\{1,\dots,N\}$ into $N/Ma^2$ intervals of length $Ma^2$. We show that a RW trajectory satisfying $A_N\leq\alpha N$ must, in a positive fraction of these $N/Ma^2$ intervals, spend more than a third of the time at distance less than $2\alpha$ from $0$. This gives the upper bound
\begin{equation}\label{gsd}
\mathbf{P}_{\beta}\big(A_N\leq\alpha N\big)\leq\Bigl[4e\sup_{x\in\mathbb{Z}}\phi_{\beta,x}(2\alpha,Ma^2,Ma^2/3)\Bigr]^{\frac{N}{4Ma^2}}
\end{equation}
with
\begin{equation}\label{defphi}
\textstyle\phi_{\beta,x}(t,n,m):=\mathbf{P}_{\beta,x}\Bigl(\sum_{i=1}^{n}\mathbf{1}_{\{|V_i|<t\}}\geq m\Bigr),\quad t,m\in[0,\infty),\,x\in\mathbb{Z},n\in\mathbb{N}.
\end{equation}
In step $2$, we prove that we can remove the supremum over the starting position of $V$ in \eqref{gsd} by simply enlarging $\alpha$ to $2\alpha$. To be more specific we will show that
\begin{equation}\label{unsup}
\sup_{x\in\mathbb{Z}}\phi_{\beta,x}(\alpha,N,N/3)\leq\phi_{\beta,0}(2\alpha,N,N/4),\quad\text{for $\alpha>0$ and $N\in\mathbb{N}$}.
\end{equation}
Finally, we will see in step 3 that, by choosing $M$ large enough, there exists $\alpha_M>0$ such that for all $\alpha>\alpha_M$
\begin{equation}\label{boup}
\sup_{\beta\in K}\phi_{\beta,0}(4\alpha,Ma^2,Ma^2/4)<1/4e.
\end{equation}
By putting together \eqref{defphi}, \eqref{unsup} and \eqref{boup}, we complete the proof of Lemma \ref{rwub}.

\begin{step}
\end{step}
For $\alpha>0$, recall that $a=\lfloor\alpha\rfloor$, $M\in\mathbb{N}$ and pick $N\in Ma^2\mathbb{N}:=\{Ma^2n, n\in\mathbb{N}\}$. Then, partition the time interval $\{1,\ldots,N\}$ into the $N/Ma^2$ subintervals $\{I_{j,M}\}_{j\in\{1,\dots,N/Ma^2\}}$ of length $Ma^2$ each, i.e.,
\begin{equation}
I_{j,M}=\{(j-1)Ma^2+1,\ldots,jMa^2\}\quad\text{for}\quad j=1,\ldots,\tfrac{N}{Ma^2}.
\end{equation}
Pick $V=(V_i)_{i=0}^N\in\{0\}\times\mathbb{Z}^N$ and consider, for each $j\leq N/Ma^2$, the number of steps made by $V$ on the time interval $I_{j,M}$ that satisfy $|V_i|\leq2\alpha$. Then, set
\begin{align}\label{resto}
&A_{N,\alpha}=\Bigl\{(V_i)_{i=0}^N:\sum_{i=0}^N|V_i|\leq\alpha N\Bigr\},\\
\nonumber &B_{N,M,\alpha}=\biggl\{(V_i)_{i=0}^N:\#\Bigl\{1\leq j\leq\frac{N}{Ma^2}:\sum_{i\in I_{j,M}}\mathbf{1}_{\{|V_i|<2\alpha\}}\geq\frac{Ma^2}{3}\Bigr\}\geq\frac{N}{4Ma^2}\biggr\}.
\end{align}
\begin{lemma}\label{ntb}
For all $\alpha>0$ and $M,N\in\mathbb{N}$ such that $N\in Ma^2\mathbb{N}$, the following relation holds true
\begin{equation}
A_{N,\alpha}\subseteq B_{N,M,\alpha}.
\end{equation}
\end{lemma}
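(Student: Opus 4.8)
The plan is to argue by contraposition: I will show that if a trajectory $(V_i)_{i=0}^N$ fails to lie in $B_{N,M,\alpha}$, then its area $\sum_{i=0}^N|V_i|$ must exceed $\alpha N$, so that it cannot lie in $A_{N,\alpha}$ either. The whole argument is a deterministic counting estimate that exploits the way the three constants $1/3$, $1/4$ and $2\alpha$ in the definition of $B_{N,M,\alpha}$ have been tuned; no probability enters.

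First, since $V_0=0$ and the blocks $I_{1,M},\dots,I_{N/Ma^2,M}$ partition $\{1,\dots,N\}$, I would write the area as $\sum_{i=0}^N|V_i|=\sum_{j=1}^{N/Ma^2}\sum_{i\in I_{j,M}}|V_i|$. Call a block $I_{j,M}$ \emph{good} if $\sum_{i\in I_{j,M}}\mathbf{1}_{\{|V_i|<2\alpha\}}\geq Ma^2/3$ and \emph{bad} otherwise. Saying that $(V_i)_{i=0}^N\notin B_{N,M,\alpha}$ means that there are fewer than $N/(4Ma^2)$ good blocks; since there are $N/Ma^2$ blocks in total, the number of bad blocks is strictly larger than $N/Ma^2-N/(4Ma^2)=3N/(4Ma^2)$.

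Next, I would estimate the area coming from a single bad block. In a bad block, strictly fewer than $Ma^2/3$ of its $Ma^2$ indices satisfy $|V_i|<2\alpha$, hence strictly more than $2Ma^2/3$ of them satisfy $|V_i|\geq 2\alpha$. Discarding the other indices gives $\sum_{i\in I_{j,M}}|V_i|>\tfrac{2Ma^2}{3}\cdot 2\alpha=\tfrac{4\alpha Ma^2}{3}$. Summing this strict bound over the bad blocks and using that there are more than $3N/(4Ma^2)$ of them, I obtain $\sum_{i=0}^N|V_i|>\tfrac{3N}{4Ma^2}\cdot\tfrac{4\alpha Ma^2}{3}=\alpha N$, where the powers of $a$, the factor $M$ and the numerical constants all cancel exactly. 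This is precisely the negation of the defining inequality of $A_{N,\alpha}$, so $(V_i)_{i=0}^N\notin A_{N,\alpha}$, which establishes the inclusion $A_{N,\alpha}\subseteq B_{N,M,\alpha}$.

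There is no genuine obstacle here; the only thing that requires care is the bookkeeping with the strict inequalities — the lower bound on the number of bad blocks is strict and the per-block area bound is strict — so that their product lands exactly on the threshold $\alpha N$ and yields $\sum_{i=0}^N|V_i|>\alpha N$ rather than merely $\geq\alpha N$. It is also worth noting explicitly that the distinction between $a=\lfloor\alpha\rfloor$ and $\alpha$ is irrelevant to the conclusion, since every occurrence of $a^2$ cancels in the final multiplication.
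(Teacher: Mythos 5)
Your proof is correct, and it is essentially the paper's argument written out in full: the paper's one‑line proof also works by contraposition, observing that $V\in A_{N,\alpha}$ forces at least half of the indices to satisfy $|V_i|\leq 2\alpha$, while $V\notin B_{N,M,\alpha}$ forces more than $\tfrac{3N}{4Ma^2}\cdot\tfrac{2Ma^2}{3}=\tfrac N2$ indices (coming from the bad blocks) to satisfy $|V_i|\geq 2\alpha$. You simply carry the same bad‑block count directly into a lower bound on the area $\sum_i|V_i|>\alpha N$, which is a clean and equally valid way to land the same cancellation of constants.
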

\begin{proof}
Note that $V\in A_{N,\alpha}$ necessarily satisfies $\sum_{i=0}^N\mathbf{1}_{\{|V_i|\leq2\alpha\}}\geq N/2$ and this latter conditions is clearly not verified if $V\notin B_{N,M,\alpha}$.
\end{proof}
We recall \eqref{resto}, we pick $N\in 4Ma^2\mathbb{N}$ and we set $k:=\frac{N}{4Ma^2}\in\mathbb{N}$. By taking into account the indices of those intervals $I_{j,M}$ on which $\sum_{i\in I_{j,M}}\mathbf{1}_{\{|V_i|<2\alpha\}}\geq\frac{Ma^2}{3}$, we obtain the upper bound
\begin{equation}\label{eq:ii}
\mathbf{P}_{\beta}\big(B_{N,M,\alpha}\big)\leq\sum_{j_1<\ldots<j_k}\mathbf{E}_{\beta}\biggl(\prod_{s=1}^k\mathbf{1}_{\bigl\{\sum_{s\in I_{j_s,M}}\mathbf{1}_{\{|V_i|<2\alpha\}}\geq\frac{Ma^2}{3}\bigr\}}\biggr)
\end{equation}
where the sum is taken all over possible $k$-uples in $\{1,\ldots,\frac{N}{Ma^2}\}$. We recall \eqref{defphi} and we apply the Markov property $k$ times at $(j_1-1)Ma^2,\dots,(j_k-1)Ma^2$ to obtain
\begin{equation}\label{eq:im}
\mathbf{E}_{\beta}\biggl(\prod_{s=1}^k\mathbf{1}_{\bigl\{\sum_{s\in I_{j_s,M}}\mathbf{1}_{\{|V_i|<2\alpha\}}\geq\frac{Ma^2}{3}\bigr\}}\biggr)\leq\Bigl[\sup_{x\in\mathbb{Z}}\phi_{\beta,x}(2\alpha,Ma^2,Ma^2/3)\Bigr]^k.
\end{equation}
At this stage, Lemma \ref{ntb}, \eqref{eq:ii}, \eqref{eq:im} and the inequalities $\binom{n}{m}\leq n^m/m!$ and $m!\geq(m/e)^m$ allow us to write
\begin{equation}
\mathbf{P}_{\beta}(A_N\leq\alpha N)\leq\Bigl(\frac{eN}{Ma^2k}\Bigr)^k\Bigl[\sup_{x\in\mathbb{Z}}\phi_{\beta,x}(2\alpha,Ma^2,Ma^2/3)\Bigr]^k=\Bigl[4e\sup_{x\in\mathbb{Z}}\phi_{\beta,x}(2\alpha,Ma^2,Ma^2/3)\Bigr]^{\frac{N}{4Ma^2}}.
\end{equation}

\begin{step}
\end{step}
We want to prove \eqref{unsup}. First, we prove that
\begin{equation}
\sup_{x\notin(-\alpha,\alpha)}\phi_{\beta,x}(\alpha,N,N/3)\leq\sup_{y\in(-\alpha,\alpha)}\phi_{\beta,y}(\alpha,N,N/4). 
\end{equation}
Let $\tau_1=\inf\{n\in\mathbb{N}:V_n\in(-\alpha,\alpha)\}$. Pick $x\notin(-\alpha,\alpha)$ and apply the Markov property at time $\tau_1$ to obtain
\begin{align}\label{eq:pp}
\phi_{\beta,x}(\alpha,N,N/3)&=\mathbf{P}_{\beta,x}{\textstyle\bigl(\sum_{i=1}^{N}\mathbf{1}_{\{|V_i|<\alpha\}}\geq\tfrac{N}{3}}\bigr)\\
\nonumber&=\textstyle\sum_{n=1}^{N}\sum_{y=-a}^a\mathbf{P}_{\beta,x}\bigl(\tau_1=n,V_n=y\bigr)\,\mathbf{P}_{\beta,y}\Bigl(\sum_{i=1}^{N-n}\mathbf{1}_{\{|V_i|<\alpha\}}\geq\tfrac{N}{3}-1\Bigr).
\end{align}
We have, for $N\geq12$,
\begin{align}\label{eq:pl}
\nonumber\textstyle\mathbf{P}_{\beta,y}\Bigl(\sum_{i=1}^{N-n}\mathbf{1}_{\{|V_i|<\alpha\}}\geq\frac{N}{3}-1\Bigr)&\leq\textstyle\mathbf{P}_{\beta,y}\Bigl(\sum_{i=1}^{N}\mathbf{1}_{\{|V_i|<\alpha\}}\geq\frac{N}{4}\Bigr)\\
&\leq\sup_{y\in(-\alpha,\alpha)}\phi_{\beta,y}(\alpha,N,N/4),
\end{align}
and by plugging \eqref{eq:pl} into \eqref{eq:pp}, we can write that $\phi_{\beta,x}(\alpha,N,N/3)$ is smaller than\\ $\sup_{y\in(-\alpha,\alpha)}\phi_{\beta,y}(\alpha,N,N/4)$. The latter is valid for all $x\notin(-\alpha,\alpha)$, therefore
\begin{equation}\label{eq:ll}
\sup_{x\notin(-\alpha,\alpha)}\phi_{\beta,x}(\alpha,N,N/3)\leq\sup_{y\in(-\alpha,\alpha)}\phi_{\beta,y}(\alpha,N,N/4).
\end{equation}
Because of \eqref{eq:ll}, the step will be complete once we show that $\phi_{\beta,x}(\alpha,N,N/4)\leq\phi_{\beta,0}(2\alpha,N,N/4)$ for $x\in(-\alpha,\alpha)$. We recall that $P_{\beta,x}$ is the law of the random walk $(V_i)_{i\geq0}$ defined in \eqref{lawP} with $V_0=x$. Thus, if $(V_i)_{i\geq 0}$ follows the law $\mathbf{P}_\beta=\mathbf{P}_{\beta,0}$ then $(V_i+x)_{i\geq 0}$ follows the law $\mathbf{P}_{\beta,x}$. Moreover, for $|x|<\alpha$ the inequality $|V_i+x|<\alpha$ immediately entails $|V_i|<2\alpha$. Thus,
\begin{equation}
\textstyle\mathbf{P}_{\beta,x}\Bigl(\sum_{i=1}^{N}\mathbf{1}_{\{|V_i|<\alpha\}}\geq\frac{N}{4}\Bigr)\leq\mathbf{P}_{\beta,0}\Bigl(\sum_{i=1}^{N}\mathbf{1}_{\{|V_i|<2\alpha\}}\geq\frac{N}{4}\Bigr),
\end{equation}
which is exactly $\phi_{\beta,x}(\alpha,N,N/4)\leq\phi_{\beta,0}(2\alpha,N,N/4)$ and completes the step.

\begin{step}
\end{step}
In this step, we show \eqref{boup}. First observe that
\begin{equation}\label{rest}
\mathbf{P}_{\beta}\bigg(\sum_{i=1}^{Ma^2}\mathbf{1}_{\{|V_i|<4\alpha\}}\geq\tfrac{Ma^2}{4}\bigg)\leq\mathbf{P}_{\beta}\biggl(\sum_{i=Ma^2/5}^{Ma^2}\mathbf{1}_{\{|V_i|<4\alpha\}}\geq\tfrac{Ma^2}{20}\biggr).
\end{equation}
Let $\sigma_\beta^2=\mathbf{E}_\beta(v_1^2)$ and $\rho_\beta=\mathbf{E}_\beta(|v_1|^3)$. For all $x\in\mathbb{R}$ and $n\in\mathbb{N}$, by Berry-Esseen Theorem (see \cite[p.~124]{RD05}), we obtain
\begin{equation}\label{rrtf}
\textstyle\Bigl|\mathbf{P}_{\beta}\Bigl(\frac{|v_1+\ldots+v_n|}{\sigma_\beta\sqrt{n}}\leq x\Bigr)-\mathbf{P}\Bigl(|\mathcal{N}(0,1)|\leq x\Bigr)\Bigr|\leq\frac{6\rho_\beta}{\sigma_\beta^3\sqrt{n}}.
\end{equation}
From \eqref{rest}, we can apply the Markov inequality, which gives
\begin{equation}\label{esti} \mathbf{P}_{\beta}\biggl(\sum_{i=Ma^2/5}^{Ma^2}\mathbf{1}_{\{|V_i|<4\alpha\}}\geq\tfrac{Ma^2}{20}\biggr)\leq\tfrac{20}{Ma^2}\sum_{i=Ma^2/5}^{Ma^2}\mathbf{P}_{\beta}(|V_i|<4\alpha).
\end{equation}
For all $i\geq Ma^2/5$, we have
\begin{equation}\label{upbd}
\mathbf{P}_{\beta}(|V_i|<4\alpha)=\mathbf{P}_{\beta}\Bigl(\tfrac{|V_i|}{\sigma_\beta\sqrt{i}}<\tfrac{4\alpha}{\sigma_\beta\sqrt{i}}\Bigr)\leq\mathbf{P}_{\beta}\Bigl(\tfrac{|V_i|}{\sigma_\beta\sqrt{i}}\leq\tfrac{4\sqrt{5}}{\sigma_\beta\sqrt{M}}\Bigr).
\end{equation}
By using the upper bound in \eqref{rrtf}, we can rewrite \eqref{upbd} as
\begin{equation}\label{ijk}
\mathbf{P}_{\beta}(|V_i|<4\alpha)\leq\mathbf{P}\Bigl(|\mathcal{N}(0,1)|\leq\tfrac{4\sqrt{5}}{\sigma_\beta\sqrt{M}}\Bigr)+\tfrac{6\rho_\beta}{\sigma_\beta^3\sqrt{i}}.
\end{equation}
Thus, we can use \eqref{ijk} in \eqref{esti} to obtain 
\begin{equation}\label{eq:sub}
\mathbf{P}_{\beta}\biggl(\sum_{i=Ma^2/5}^{Ma^2}\mathbf{1}_{\{|V_i|<4\alpha\}}\geq\tfrac{Ma^2}{20}\biggr)\leq 20\,\mathbf{P}\Bigl(|\mathcal{N}(0,1)|\leq\tfrac{4\sqrt{5}}{\sigma_\beta\sqrt{M}}\Bigr)+\tfrac{20}{Ma^2}\sum_{i=Ma^2/5}^{Ma^2}\tfrac{6\rho_\beta}{\sigma_\beta^3\sqrt{i}}.
\end{equation}
At this stage, we replace $\sigma_\beta$ and $\rho_\beta$ by $\sigma=\inf_{\beta\in K}\sigma_\beta$ and $\rho=\sup_{\beta\in K}\rho_\beta$ in \eqref{eq:sub} so that the inequality in \eqref{eq:sub} becomes uniform in $\beta\in K$. We can choose $M$ such that $20\,\mathbf{P}(|\mathcal{N}(0,1)|\leq \frac{4\sqrt{5}}{\sigma\sqrt{M}})<1/4e$. Since
\begin{equation}\label{steco}
\tfrac{120\rho}{Ma^2\sigma^3}\sum_{i=Ma^2/5}^{Ma^2}\tfrac{1}{\sqrt{i}}\to 0\text{ as }\alpha\to\infty,
\end{equation}
there exists $\alpha_M>0$ such that for all $\alpha\geq\alpha_M$
\begin{equation}\label{detro}
20\,\mathbf{P}\Bigl(|\mathcal{N}(0,1)|\leq \tfrac{4\sqrt{5}}{\sigma\sqrt{M}}\Bigr)+\tfrac{120\rho}{Ma^2\sigma^3}\sum_{i=Ma^2/5}^{Ma^2}\tfrac{1}{\sqrt{i}}< 1/4e.
\end{equation}
It remains to recall \eqref{defphi}, and then \eqref{rest}, \eqref{eq:sub} and \eqref{detro} are sufficient to complete the step.

\end{document}